 \newtheorem{Theorem}{Theorem}[section]
 \newtheorem{Corollary}[Theorem]{Corollary}
 \newtheorem{Lemma}[Theorem]{Lemma}
 \newtheorem{Remark}[Theorem]{Remark}
 \numberwithin{equation}{section}
\begin{document}

\title[concavity of minimal $L^{2}$ integrals]
 {General concavity of minimal $L^{2}$ integrals related to multiplier ideal sheaves}

\author{Qi'an Guan}
\address{Address: School of Mathematical Sciences,
Peking University, Beijing, 100871, China.}
\email{guanqian@math.pku.edu.cn}

\thanks{}

\subjclass[2010]{32D15, 32E10, 32L10, 32U05, 32W05}

\keywords{strong openness conjecture, multiplier ideal sheaf,
plurisubharmonic function, sublevel set}

\date{\today}

\dedicatory{}

\commby{}


\begin{abstract}
In this note, we present a general version of the concavity of the minimal $L^{2}$ integrals related to multiplier ideal sheaves.
\end{abstract}

\maketitle

\section{Introduction}
The multiplier ideal sheaves related to plurisubharmonic functions plays an important role in complex geometry and algebraic geometry
(see e.g. \cite{tian87,Nadel90,siu96,DEL00,D-K01,demailly-note2000,D-P03,Laz04,siu05,siu09,demailly2010}).
We recall the definition of the multiplier ideal sheaves as follows.

\emph{Let $\varphi$ be a plurisubharmonic function (see \cite{demailly-book,Si,siu74}) on a complex manifold.
It is known that the multiplier ideal sheaf $\mathcal{I}(\varphi)$ was defined as the sheaf of germs of holomorphic functions $f$ such that
$|f|^{2}e^{-\varphi}$ is locally integrable (see \cite{demailly2010}).}

In \cite{demailly-note2000}, Demailly posed the so-called strong openness conjecture on multiplier ideal sheaves (SOC for short) i.e. $\mathcal{I}(\varphi)=\mathcal{I}_{+}(\varphi):=\cup_{\varepsilon>0}\mathcal{I}((1+\varepsilon)\varphi)$.
When $\mathcal{I}(\varphi)=\mathcal{O}$, SOC degenerates to the openness conjecture (OC for short) posed by
Demailly-Koll\'{a}r \cite{D-K01}.

The dimension two case of OC was proved by Favre-Jonsson \cite{FM05j},
and the dimension two case of SOC was proved by Jonsson-Mustata \cite{JM12}.
OC was proved by Berndtsson \cite{berndtsson13}.
SOC was proved by Guan-Zhou \cite{GZopen-c},
see also \cite{Lempert14} and \cite{Hiep14}.

In \cite{berndtsson},
Berndtsson establishes an effectiveness result of OC.
Simulated by Berndtsson's effectiveness result of OC,
continuing the solution of SOC \cite{GZopen-c},
Guan-Zhou \cite{GZopen-effect} establish an effectiveness result of SOC.

Recently, we \cite{guan_sharp} establish a sharp version of the effectiveness result of SOC
by considering a concavity property of the minimal $L^{2}$ integrals related to multiplier ideals.

In the present note,
we obtain a general version of the above concavity property.

\subsection{A general concavity property}\label{sec:main}

Let $M$ be a $n-$dimensional Stein manifold,
and let $K_{M}$ be the canonical (holomorphic) line bundle on $M$.
Let $\psi<-T$ be a plurisubharmonic function on $M$,
and let $\varphi$ be a Lebesgue measurable function on $M$,
such that $\varphi+\psi$ is a plurisubharmonic function on $M$,
where $T\in(-\infty,+\infty)$.

We call a positive smooth function $c$ on $(T,+\infty)$ in class $\mathcal{P}_{T}$ if the following three statements hold

(1) $\int_{T}^{+\infty}c(t)e^{-t}dt<+\infty$;

(2) $c(t)e^{-t}$ is decreasing with respect to $t$;

(3) for any compact subset $K\subseteq M$, $e^{-\varphi}c(-\psi)$ has a positive lower bound on $K$.

Especially, if $\varphi\equiv 0$, then (3) is equivalent to $\liminf_{t\to+\infty}c(t)>0$.

\

{Let $Z_{0}$ be a subset of $\{\psi=-\infty\}$ such that $Z_{0}\cap Supp(\{\mathcal{O}/\mathcal{I(\varphi+\psi)}\})\neq\emptyset$.
Let $U\supseteq Z_{0}$ be an open subset of $M$
and let $f$ be a holomorphic $(n,0)$ form on $U$.
Let $\mathcal{F}\supseteq\mathcal{I}(\varphi+\psi)|_{U}$ be a coherent subsheaf of $\mathcal{O}$ on $U$.}

{Denote
\begin{equation*}
\begin{split}
\inf\{\int_{\{\psi<-t\}}|\tilde{f}|^{2}e^{-\varphi}c(-\psi):\exists {\,}\text{open set}{\,} U' &{\,} \text{s.t.}{\,} Z_{0}\subset U'\subset U, {\,}\&{\,}(\tilde{f}-f)\in H^{0}(\{\psi<-t\}\cap U',
\\ &\mathcal{O}(K_{M})\otimes\mathcal{F}){\,}\&{\,}\tilde{f}\in H^{0}(\{\psi<-t\},\mathcal{O}(K_{M}))\},
\end{split}
\end{equation*}}
by $G(t;c)$ ($G(t)$ for short without misunderstanding), where $c\in\mathcal{P}_{T}$,
and $|f|^{2}:=\sqrt{-1}^{n^{2}}f\wedge\bar{f}$ for any $(n,0)$ form $f$.

If there is no holomorphic holomorphic $(n,0)$ form $\tilde{f}$ on $\{\psi<-t\}$ satisfying $(\tilde{f}-f)\in H^{0}(\{\psi<-t\}\cap U, \mathcal{O}(K_{M})\otimes\mathcal{F})$,
then we set $G(t)=-\infty$.

In the present note, we obtain the following concavity of $G(t)$.

\begin{Theorem}
\label{thm:general_concave}
$G(g^{-1}(r))$ is concave with respect to $r\in(0,\int_{T}^{+\infty}c(t)e^{-t}dt]$,
where $g(t)=\int_{t}^{+\infty}c(t_{1})e^{-t_{1}}dt_{1}$, $t\in[T,+\infty)$.
\end{Theorem}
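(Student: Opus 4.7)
The plan follows the template developed by Guan-Zhou for concavity of minimal $L^{2}$ integrals: reduce the statement to a one-step Ohsawa-Takegoshi-type extension estimate with \emph{sharp} constants, and then iterate.

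First I would reformulate concavity of $G\circ g^{-1}$ as the difference inequality
\begin{equation*}
G(t_{1})\bigl(g(t_{0})-g(t_{2})\bigr)\;\leq\;G(t_{0})\bigl(g(t_{1})-g(t_{2})\bigr)+G(t_{2})\bigl(g(t_{0})-g(t_{1})\bigr)
\end{equation*}
for every $T\leq t_{0}<t_{1}<t_{2}$ with $G(t_{0}),G(t_{2})<+\infty$. Since $g$ is strictly decreasing on $[T,+\infty)$, this is exactly the three-point concavity condition for $G\circ g^{-1}$ as a function of $r=g(t)$. I would first establish existence of a minimizer $F_{t}$ attaining $G(t)$ whenever $G(t)<+\infty$: the admissible set is nonempty convex in $L^{2}_{\mathrm{loc}}$, the functional is strictly convex and weakly lower semi-continuous, and property (3) of $\mathcal{P}_{T}$ gives a local lower bound on the weight $e^{-\varphi}c(-\psi)$, so weak limits are admissible and minimize.

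The central step is the following extension claim: if $G(t_{2})<+\infty$, then for any $T\leq t_{1}<t_{2}$ there exists a holomorphic $(n,0)$-form $\tilde{F}$ on $\{\psi<-t_{1}\}$, admissible for $G(t_{1})$, with
\begin{equation*}
\int_{\{\psi<-t_{1}\}}|\tilde{F}|^{2}e^{-\varphi}c(-\psi)\;\leq\;\int_{\{\psi<-t_{2}\}}|F_{t_{2}}|^{2}e^{-\varphi}c(-\psi)\;+\;\bigl(g(t_{1})-g(t_{2})\bigr)A_{t_{2}},
\end{equation*}
where $A_{t_{2}}$ depends only on data at level $t_{2}$. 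The displayed inequality, applied to both pairs $(t_{0},t_{1})$ and $(t_{1},t_{2})$ and combined with the variational characterization of $G$, is then massaged by elementary algebra into the three-point concavity inequality.

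To construct $\tilde{F}$, I would perform a Hörmander / Ohsawa-Takegoshi-type $\bar\partial$-argument on $\{\psi<-t_{1}\}$ with an auxiliary weight tailored to $c$ and $g$: choose a smooth cut-off $\chi_{\varepsilon}(\psi)$ equal to $1$ on $\{\psi\leq-t_{2}-\varepsilon\}$ and to $0$ on $\{\psi\geq-t_{2}\}$, form the smooth $(n,0)$-form $\chi_{\varepsilon}(\psi)F_{t_{2}}$ (extended by zero), and solve $\bar\partial u_{\varepsilon}=\bar\partial\bigl(\chi_{\varepsilon}(\psi)F_{t_{2}}\bigr)$ with a weight of the form $\varphi+\psi-\log\bigl(g(\max(-\psi,t_{1}))-g(-\psi)\bigr)^{-1}$ (or a technical variant) whose plurisubharmonicity is guaranteed by properties (1)-(2) of $\mathcal{P}_{T}$. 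Setting $\tilde{F}:=\chi_{\varepsilon}(\psi)F_{t_{2}}-u_{\varepsilon}$ produces a holomorphic form; admissibility for $G(t_{1})$ follows from $\mathcal{I}(\varphi+\psi)|_{U}\subseteq\mathcal{F}$ together with the weighted integrability of $u_{\varepsilon}$. Letting $\varepsilon\to 0$ and using that the gradient term $|\bar\partial\chi_{\varepsilon}|^{2}$ concentrates on the level set $\{\psi=-t_{2}\}$ collapses the boundary contribution into the precise factor $g(t_{1})-g(t_{2})$.

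The main obstacle is extracting the \emph{exact} multiplicative constant $g(t_{1})-g(t_{2})$ in the extension estimate, rather than merely a universal multiple of it: this is what upgrades any boundedness/convexity statement to the stated concavity. Doing so requires the sharp-constant (B\l ocki-Guan-Zhou / Berndtsson-Lempert type) Ohsawa-Takegoshi extension, together with the very particular choice of plurisubharmonic auxiliary weight whose complex Hessian balances $|d\chi_{\varepsilon}(\psi)|^{2}$ against the original weight in the Bochner identity. Property (2) is used precisely to ensure plurisubharmonicity of this auxiliary weight; property (1) ensures that every quantity appearing is finite for admissible $t$; property (3) yields existence of minimizers.
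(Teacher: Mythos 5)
Your overall strategy (a sharp Ohsawa--Takegoshi-type extension with weight adapted to $c$, combined with the variational characterization of $G$) is the same family of techniques the paper uses, but the step that actually produces concavity is missing. Your central extension claim leaves the density $A_{t_{2}}$ unidentified, and two applications of $G(t_{1})\le G(t_{2})+(g(t_{1})-g(t_{2}))A_{t_{2}}$ with an unspecified $A$ cannot be ``massaged by elementary algebra'' into the three-point inequality: you need $A_{t_{2}}$ to be controlled by the slope of $G$ itself at $t_{2}$. The mechanism for this in the paper is the orthogonality identity for the minimizer $F_{t}$,
\[
\int_{\{\psi<-t\}}|F_{t}|^{2}e^{-\varphi}c(-\psi)+\int_{\{\psi<-t\}}|\hat{F}-F_{t}|^{2}e^{-\varphi}c(-\psi)
=\int_{\{\psi<-t\}}|\hat{F}|^{2}e^{-\varphi}c(-\psi),
\]
which does not appear in your plan. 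It is used twice: first to bound the collar integral $\frac{1}{B}\int_{\{-t_{2}-B<\psi<-t_{2}\}}|F_{t_{2}}|^{2}e^{-\varphi}c(-\psi)$ by the difference quotient $\frac{G(t_{2})-G(t_{2}+B)}{B}$ (because $F_{t_{2}}$ restricted to $\{\psi<-t_{2}-B\}$ is a competitor for $G(t_{2}+B)$), and second to show that the cross terms in the lower bound $\int|\tilde F|^{2}\gtrsim\int|\tilde F-(1-b(\psi))F_{t_{2}}|^{2}+\dots$ vanish in the limit, so that the extended form really dominates $G(t_{1})$ (resp. $G(T)$) and not something weaker. Without both uses, the constant in your extension estimate cannot be matched against $G$ on either side of the inequality.

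Relatedly, the natural identification of $A_{t_{2}}$ forces a limit in the collar width $B\to 0$, and what comes out is not a finite-difference inequality between two arbitrary levels but a comparison of a secant slope with a one-sided lower derivative; in the paper this is
\[
\frac{G(T)-G(t_{0})}{g(T)-g(t_{0})}\leq\frac{\liminf_{B\to 0+}\frac{G(t_{0})-G(t_{0}+B)}{B}}{c(t_{0})e^{-t_{0}}},
\]
and concavity then follows from the standard criterion for \emph{lower semicontinuous} functions. That criterion requires first proving that $G$ is decreasing and right-continuous in $t$, an ingredient your plan does not address. (Your cutoff concentrating exactly on the level set $\{\psi=-t_{2}\}$ is also delicate: $\psi$ is only plurisubharmonic, so that level set need not be a reasonable hypersurface; the paper keeps a collar of positive width throughout and passes to the limit only in the scalar quantities.) These points are fixable within your framework, but as written the passage from the extension estimate to the three-point concavity inequality does not close.
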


Especially, when $c(t)\equiv 1$ and $A=0$, Theorem \ref{thm:general_concave} degenerates to the concavity of the minimal $L^{2}$ integrals related to multiplier ideals in \cite{guan_sharp} (Proposition 4.1 in \cite{guan_sharp}).

Theorem \ref{thm:general_concave} implies the following.

\begin{Corollary}
\label{coro:linear_equvi}
For any $c\in \mathcal{P}_{T}$,
the following three statements are equivalent

(1) $G(g^{-1}(r))$ is linear with respect to $r\in(0,\int_{T}^{+\infty}c(t)e^{-t}dt]$,
i.e.,
$$G(t)=\frac{G(T)}{\int_{T}^{+\infty}c(t)e^{-t}dt}\int_{t}^{+\infty}c(t_{1})e^{-t_{1}}dt_{1}$$
holds for any $t\in[T,+\infty)$;

(2) $\frac{G(g^{-1}(r_{0}))}{r_{0}}\leq\frac{G(T)}{\int_{T}^{+\infty}c(t)e^{-t}dt}$ holds
for some $r_{0}\in(0,\int_{T}^{+\infty}c(t)e^{-t}dt)$, i.e., $$\frac{G(t_{0})}{\int_{t_{0}}^{+\infty}c(t_{1})e^{-t_{1}}dt_{1}}\leq\frac{G(T)}{\int_{T}^{+\infty}c(t)e^{-t}dt}$$
holds for some $t_{0}\in(T,+\infty)$;

(3) $\lim_{r\to0+0}\frac{G(g^{-1}(r))}{r}\leq\frac{G(T)}{\int_{T}^{+\infty}c(t)e^{-t}dt}$ holds,
i.e.,
$$\lim_{t\to+\infty}\frac{G(t)}{\int_{t}^{+\infty}c(t_{1})e^{-t_{1}}dt_{1}}\leq\frac{G(T)}{\int_{T}^{+\infty}c(t)e^{-t}dt}$$
holds.
\end{Corollary}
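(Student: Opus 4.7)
The plan is to introduce the substitution $h(r):=G(g^{-1}(r))$ on $(0,R]$ with $R:=\int_{T}^{+\infty}c(t)e^{-t}dt$, translate the three statements into intrinsic statements about $h$, and then derive the equivalences from a monotonicity lemma for $r\mapsto h(r)/r$. Under this substitution $h(R)=G(T)$; Theorem \ref{thm:general_concave} says $h$ is concave on $(0,R]$, and nonnegativity of the competitor $L^{2}$ integrals in the definition of $G$ gives $h\geq 0$. In these terms, (1) reads $h(r)=(h(R)/R)r$ on $(0,R]$, (2) reads $h(r_{0})/r_{0}\leq h(R)/R$ for some $r_{0}\in(0,R)$, and (3) reads $\lim_{r\to 0^{+}}h(r)/r\leq h(R)/R$.

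The key lemma I will prove is that $r\mapsto h(r)/r$ is non-increasing on $(0,R]$. Since $h$ is concave on $(0,R]$, the limit $h(0^{+}):=\lim_{r\to 0^{+}}h(r)$ exists and lies in $[-\infty,+\infty)$ (concavity on a bounded interval rules out blow-up to $+\infty$); combined with $h\geq 0$ one gets $h(0^{+})\in[0,+\infty)$. For $0<r_{1}<r_{2}\leq R$ and small $\epsilon\in(0,r_{1})$, writing $r_{1}=\lambda\epsilon+(1-\lambda)r_{2}$ with $\lambda=(r_{2}-r_{1})/(r_{2}-\epsilon)$, applying concavity, and letting $\epsilon\to 0^{+}$ yields $h(r_{1})\geq\bigl((r_{2}-r_{1})/r_{2}\bigr)h(0^{+})+(r_{1}/r_{2})h(r_{2})\geq(r_{1}/r_{2})h(r_{2})$, hence $h(r_{1})/r_{1}\geq h(r_{2})/r_{2}$.

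Once monotonicity is in place, the equivalences are short: $h(R)/R$ is the infimum and $\lim_{r\to 0^{+}}h(r)/r$ is the supremum of $h(r)/r$ on $(0,R]$. Hence (3)$\Rightarrow$(2) is immediate, since $\sup\leq\inf$ forces $h(r)/r$ to be constant equal to $h(R)/R$, and then (2) holds at any $r_{0}$. For (2)$\Rightarrow$(1), the sandwich $h(R)/R\leq h(r_{0})/r_{0}\leq h(R)/R$ forces equality at $r_{0}$, and monotonicity then pins $h(r)/r\equiv h(R)/R$ on $[r_{0},R]$. To extend the linearity to $(0,r_{0})$, I choose $r\in(0,r_{0})$ and $r'\in(r_{0},R)$, write $r_{0}=\lambda r+(1-\lambda)r'$, and apply concavity at $r_{0}$; substituting the already known linear values of $h(r_{0})$ and $h(r')$ gives $h(r)\leq(h(R)/R)r$, while the reverse inequality is the monotonicity bound. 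The directions (1)$\Rightarrow$(2) and (1)$\Rightarrow$(3) are immediate.

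The only real obstacle I anticipate is the monotonicity lemma, specifically the need to rule out $h(0^{+})=+\infty$ so that the $\epsilon\to 0^{+}$ limit in the concavity inequality is meaningful. This is a standard property of concave functions on a bounded interval, and once it is in place everything else reduces to elementary bookkeeping with convex combinations.
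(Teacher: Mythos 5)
Your proposal is correct, and it is exactly the elementary concavity argument that the paper leaves implicit (the paper offers no written proof of this corollary, merely asserting that it follows from Theorem \ref{thm:general_concave}). The substitution $h(r)=G(g^{-1}(r))$, the observation $h\geq 0$ with $h(R)=G(T)$, and the resulting monotonicity of $r\mapsto h(r)/r$ are precisely the intended route, and your handling of the endpoint limit $h(0^{+})$ is sound.
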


\section{Proof of Theorem \ref{thm:general_concave}}

In this section,
we modify some techniques in \cite{guan_sharp} and prove Theorem \ref{thm:general_concave}.

\subsection{$L^{2}$ methods related to $L^{2}$ extension theorem}

Let $c(t)$ be a positive function in $C^{\infty}((T,+\infty))$ satisfying
$\int_{T}^{\infty}c(t)e^{-t}dt<\infty$ and
\begin{equation}
\label{equ:c_A}
\big(\int_{T}^{t}c(t_{1})e^{-t_{1}}dt_{1}\big)^{2}>c(t)e^{-t}
\int_{T}^{t}(\int_{T}^{t_{2}}c(t_{1})e^{-t_{1}}dt_{1})dt_{2},
\end{equation}
for any $t\in(T,+\infty)$,
where $T\in(-\infty,+\infty)$.
This class of functions was denoted by $\mathcal{C}_{T}$.
Especially, if $c(t)e^{-t}$ is decreasing with respect to $t$ and $\int_{T}^{\infty}c(t)e^{-t}dt<\infty$,
then inequality \ref{equ:c_A} holds  (see \cite{guan-zhou13ap}).

In this section,
we present the following Lemma,
whose various forms already appear in \cite{guan-zhou13p,guan-zhou13ap,guan_sharp} etc.:

\begin{Lemma} \label{lem:GZ_sharp}
Let $B\in(0,+\infty)$ and $t_{0}\geq0$ be arbitrarily given.
Let $M$ be a $n-dimensional$ Stein manifold.
Let $\psi<-T$ be a plurisubharmonic function
on $M$.
Let $\varphi$ be a plurisubharmonic function on $M$.
Let $F$ be a holomorphic $(n,0)$ form on $\{\psi<-t_{0}\}$,
such that
\begin{equation}
\label{equ:20171124a}
\int_{K\cap\{\psi<-t_{0}\}}|F|^{2}<+\infty
\end{equation}
for any compact subset $K$ of $M$,
and
\begin{equation}
\label{equ:20171122a}
\int_{M}\frac{1}{B}\mathbb{I}_{\{-t_{0}-B<\psi<-t_{0}\}}|F|^{2}e^{-\varphi}d\lambda_{n}\leq C<+\infty.
\end{equation}
Then there exists a
holomorphic $(n,0)$ form $\tilde{F}$ on $M$, such that,
\begin{equation}
\label{equ:3.4}
\begin{split}
\int_{M}&|\tilde{F}-(1-b(\psi))F|^{2}e^{-\varphi+v(\psi)}c(-v(\psi))d\lambda_{n}\leq C\int_{T}^{t_{0}+B}c(t)e^{-t}dt
\end{split}
\end{equation}
where
$b(t)=\int_{-\infty}^{t}\frac{1}{B}\mathbb{I}_{\{-t_{0}-B< s<-t_{0}\}}ds$,
$v(t)=\int_{0}^{t}b(s)ds$,
and $c(t)\in \mathcal{C}_{T}$.
\end{Lemma}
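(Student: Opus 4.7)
The strategy is to construct $\tilde F$ in the form $\tilde F = (1-b(\psi))F - u$, where $u$ is an $L^{2}$-solution of a suitable $\bar\partial$-equation on $M$ tuned to the weight $e^{-\varphi+v(\psi)}c(-v(\psi))$. The (Lipschitz) cutoff $(1-b(\psi))F$ agrees with $F$ on $\{\psi<-t_{0}-B\}$ and is extended by $0$ on $\{\psi\geq -t_{0}\}$, so it is defined on all of $M$ but is not holomorphic; its $\bar\partial$ is the $(n,1)$-current
$$\alpha \;:=\; \bar\partial\bigl((1-b(\psi))F\bigr) \;=\; -b'(\psi)\,\bar\partial\psi\wedge F,$$
supported in the strip $S:=\{-t_{0}-B<\psi<-t_{0}\}$, whose $L^{2}$-mass there is controlled by $C$ via hypothesis \eqref{equ:20171122a}.

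First I would exhaust $M$ by a sequence of relatively compact smoothly bounded strongly pseudoconvex Stein subdomains $D_{m}\Subset D_{m+1}\Subset M$ and regularize $b$, $\psi$, and $\varphi$ on each $D_{m}$ to smooth functions (with $\psi$ and $\varphi$ strictly plurisubharmonic), invoking condition (3) in the definition of $\mathcal{P}_{T}$ to guarantee a locally uniform lower bound on the weight. On each $D_{m}$ I would then solve $\bar\partial u_{m}=\alpha$ by a twisted Bochner--Kodaira--Nakano / Donnelly--Fefferman type estimate, obtaining a solution that satisfies
$$\int_{D_{m}}|u_{m}|^{2}\,e^{-\varphi+v(\psi)}c(-v(\psi))\,d\lambda_{n} \;\leq\; C\int_{T}^{t_{0}+B} c(t)e^{-t}\,dt.$$
The twist factor here is manufactured from $g(s):=\int_{T}^{s}c(\tau)e^{-\tau}d\tau$ composed with $-v(\psi)$, chosen so that the curvature positivity required by the twisted estimate reduces pointwise to the defining inequality \eqref{equ:c_A} of the class $\mathcal{C}_{T}$. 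Since $b'(\psi)=\tfrac{1}{B}\mathbb{I}_{S}$ and $-\psi$ takes values in $(t_{0},t_{0}+B)$ on $S$, the contribution of $\alpha$ to the right-hand side, combined with $\int_{S}|F|^{2}e^{-\varphi}\,d\lambda_{n}\leq CB$ from \eqref{equ:20171122a}, produces precisely the integral $\int_{T}^{t_{0}+B}c(t)e^{-t}dt$ with constant $C$.

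Setting $\tilde F_{m}:=(1-b(\psi))F-u_{m}$ on $D_{m}$ then yields a holomorphic $(n,0)$-form on $D_{m}$ with a uniform bound in the target weighted $L^{2}$-norm, agreeing with $F$ in the region where $b(\psi)=0$. Passing to a weak limit along $m$ in the weighted $L^{2}$ space and using lower semicontinuity of the norm under weak convergence produces the desired $\tilde F$ on $M$ satisfying \eqref{equ:3.4}. The preliminary regularization of $b$ is removed by a final limiting argument using the fact that $\alpha$ depends linearly on $b'$ and that the estimate is linear in the integrand.

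The main obstacle I anticipate is the algebraic matching of the curvature condition produced by the twisted $\bar\partial$-estimate to the inequality \eqref{equ:c_A}. The three nested compositions $c(-v(\psi))$, $v(\psi)$, and $b(\psi)$ generate several interacting terms when differentiated, and these must be organized so that the strict positivity on the strip $S$ is \emph{exactly} equivalent to the defining inequality of $\mathcal{C}_{T}$, and so that the multiplicative constant in front of $\int_{T}^{t_{0}+B}c(t)e^{-t}dt$ is precisely $C$, with no stray factors lost or gained. The analogous constructions in \cite{guan-zhou13p,guan-zhou13ap,guan_sharp} serve as a template, and I expect only minor modifications to the choice of twist are needed to accommodate the present weight $c(-v(\psi))$.
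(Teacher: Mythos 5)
Your proposal follows essentially the same route as the paper: cut off with $(1-b(\psi))F$, solve the resulting $\bar\partial$-equation via a twisted Bochner--Kodaira estimate whose twist functions are built from $g(t)=\int_{T}^{t}c(\tau)e^{-\tau}d\tau$, match the positivity condition to inequality \eqref{equ:c_A}, and remove the regularizations of $\psi$, $\varphi$, and $b$ by successive limits over a Stein exhaustion. The one step you flag as an anticipated obstacle is exactly what the paper's Step 6 supplies explicitly, namely the ODE system $(s+\frac{s'^{2}}{u''s-s''})e^{u-t}=\frac{1}{c(t)}$, $s'-su'=1$ with solution $u(t)=-\log\bigl(\int_{T}^{t}c(t_{1})e^{-t_{1}}dt_{1}\bigr)$, which yields the constant $\int_{T}^{t_{0}+B}c(t)e^{-t}dt$ with no loss.
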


It is clear that $\mathbb{I}_{(-t_{0},+\infty)}\leq b(t)\leq\mathbb{I}_{(-t_{0}-B,+\infty)}$ and $\max\{t,-t_{0}-B\}\leq v(t) \leq\max\{t,-t_{0}\}$.

\subsection{Some properties of $G(t)$}\label{sec:minimal}
\

Following the notations and assumptions in Section \ref{sec:main}, we present some properties related to $G(t)$
in the present section.

The following lemma is a characterization of $G(T)\neq0$.

\begin{Lemma}
\label{lem:0}
$f\not\in \mathcal{F}(U)\Leftrightarrow G(T)\neq0$ (maybe $+\infty$).
\end{Lemma}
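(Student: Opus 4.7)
The plan is to prove the two directions separately. For the $(\Leftarrow)$ direction I argue by contrapositive: if $f\in\mathcal F(U)$, then (since $\{\psi<-T\}=M$) the zero form $\tilde f\equiv 0$ on $M$ is admissible, because $\tilde f-f=-f$ lies in $H^0(U,\mathcal O(K_M)\otimes\mathcal F)$; the corresponding integral vanishes, so $0\le G(T)\le 0$ and $G(T)=0$.

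For the $(\Rightarrow)$ direction I again use the contrapositive: assume $G(T)=0$ and deduce $f\in\mathcal F(U)$. Since $G(T)=0$ is finite (hence not the sentinel value $-\infty$ used when no admissible $\tilde f$ exists), I pick a minimizing sequence $\{\tilde f_j\}$ of holomorphic $(n,0)$-forms on $M$ with
$$\int_M |\tilde f_j|^2 e^{-\varphi}c(-\psi)\longrightarrow 0.$$
Condition~(3) in the definition of $\mathcal P_T$ provides a positive lower bound for $e^{-\varphi}c(-\psi)$ on every compact $K\subseteq M$, so $\tilde f_j\to 0$ in $L^2_{\mathrm{loc}}(M)$. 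Holomorphy plus the submean value property upgrade this to locally uniform convergence on $M$, and in particular on $U$. Setting $g_j:=f-\tilde f_j\in H^0(U,\mathcal O(K_M)\otimes\mathcal F)$, the sequence $\{g_j\}$ converges to $f$ locally uniformly on $U$.

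It remains to conclude that $f$ itself lies in $H^0(U,\mathcal O(K_M)\otimes\mathcal F)$. This is the only non-routine input: local sections of a coherent subsheaf of $\mathcal O(K_M)$ form a closed subset under locally uniform convergence on $U$, which follows stalkwise from Krull's intersection theorem applied to the Noetherian local rings $\mathcal O_x$ together with coherence of $\mathcal F$. Invoking this closedness yields $f\in H^0(U,\mathcal O(K_M)\otimes\mathcal F)=\mathcal F(U)$ and finishes the argument. The main obstacle, such as it is, lies in justifying this closedness; I would cite it as classical rather than redevelop it. The rest is a direct unraveling of the definition of $G(T)$, the weight bound built into $\mathcal P_T$, and the $L^2_{\mathrm{loc}}$-to-uniform passage for holomorphic forms.
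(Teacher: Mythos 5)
Your proof is correct and follows essentially the same route as the paper: the easy direction via $\tilde f\equiv 0$, and the converse by taking a minimizing sequence, using the positive lower bound of $e^{-\varphi}c(-\psi)$ on compacts to pass from $L^2$-smallness to compact convergence, and then invoking the closedness of sections of coherent sheaves under compact convergence (the paper cites Grauert--Remmert for exactly this). The only cosmetic difference is that the paper extracts a compactly convergent subsequence rather than arguing that the whole sequence tends to $0$ locally uniformly.
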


\begin{proof}
It is clear that $f\in \mathcal{F}(U)\Rightarrow G(T)=0$.

In the following part, we prove that $f\not\in \mathcal{F}(U)\Rightarrow G(T)\neq 0$ (maybe $-\infty$ or $+\infty$).
We prove it by contradiction:
if not, then there exists holomorphic $(n,0)$ forms $\{\tilde{f}_{j}\}_{j\in\mathbb{N}^{+}}$ on $M$
such that $\lim_{j\to+\infty}\int_{M}|\tilde{f}_{j}|^{2}e^{-\varphi}c(-\psi)=0$ and
$(f_{j}|_{U}-f)\in H^{0}(U,\mathcal{O}(K_{M})\otimes \mathcal{F})$ for any $j$.
As $e^{-\varphi}c(-\psi)$ has positive lower bound on any compact subset of $M$, there exists a subsequence of $\{\tilde{f}_{j}\}_{j\in\mathbb{N}^{+}}$
denoted by $\{\tilde{f}_{j_{k}}\}_{k\in\mathbb{N}^{+}}$ compactly convergent to $0$.
It is clear that $\tilde{f}_{j_{k}}-f$ is compactly convergent to $0-f=f$ on $U$.
It follows from the closedness of the sections of coherent analytic sheaves under the topology of compact convergence (see \cite{G-R})
that $f\in H^{0}(U,\mathcal{O}(K_{M})\otimes \mathcal{F})$,
which contradicts $f\not\in H^{0}(U,\mathcal{O}(K_{M})\otimes \mathcal{F})$.
Then we obtain $f\not\in H^{0}(U,\mathcal{O}(K_{M})\otimes \mathcal{F})\Rightarrow G(T)>0$ (maybe $+\infty$).
This proves Lemma \ref{lem:0}.
\end{proof}

The following lemma shows the uniqueness of the holomorphic (n,0) form related to $G(t)$.

\begin{Lemma}
\label{lem:A}
Assume that $G(t)<+\infty$ for some $t\in[T,+\infty)$.
Then there exists a unique holomorphic $(n,0)$ form $F_{t}$ on
$\{\psi<-t\}$ satisfying $(F_{t}-f)\in H^{0}(\{\psi<-t\}\cap U,\mathcal{O}(K_{M})\otimes \mathcal{F})$ and $\int_{\{\psi<-t\}}|F_{t}|^{2}e^{-\varphi}c(-\psi)=G(t)$.
Furthermore,
for any holomorphic $(n,0)$ form $\hat{F}$ on $\{\psi<-t\}$ satisfying $(\hat{F}-f)\in H^{0}(\{\psi<-t\}\cap U,\mathcal{O}(K_{M})\otimes \mathcal{F})$ and
$\int_{\{\psi<-t\}}|\hat{F}|^{2}e^{-\varphi}c(-\psi)<+\infty$,
we have the following equality
\begin{equation}
\label{equ:20170913e}
\begin{split}
&\int_{\{\psi<-t\}}|F_{t}|^{2}e^{-\varphi}c(-\psi)+\int_{\{\psi<-t\}}|\hat{F}-F_{t}|^{2}e^{-\varphi}c(-\psi)
\\=&
\int_{\{\psi<-t\}}|\hat{F}|^{2}e^{-\varphi}c(-\psi).
\end{split}
\end{equation}
\end{Lemma}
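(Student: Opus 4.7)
The statement is a standard Hilbert-space projection lemma in disguise. My plan is to view the set
\[
V := \{\tilde F \in H^0(\{\psi<-t\},\mathcal O(K_M)) : (\tilde F - f)\in H^0(\{\psi<-t\}\cap U,\mathcal O(K_M)\otimes\mathcal F),\ \|\tilde F\|<+\infty\}
\]
as an affine subspace of the weighted $L^{2}$ space with norm $\|\tilde F\|^{2}=\int_{\{\psi<-t\}}|\tilde F|^{2}e^{-\varphi}c(-\psi)$, and then obtain $F_t$ as the (unique) element of $V$ of minimal norm. The hypothesis $G(t)<+\infty$ means $V\neq\varnothing$, and it is clearly convex: if $\tilde F_1,\tilde F_2\in V$ and $\lambda\in\mathbb C$, then $\lambda\tilde F_1+(1-\lambda)\tilde F_2$ is still holomorphic on $\{\psi<-t\}$ and differs from $f$ by $\lambda(\tilde F_1-f)+(1-\lambda)(\tilde F_2-f)\in H^0(\{\psi<-t\}\cap U,\mathcal O(K_M)\otimes\mathcal F)$.

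First, I would prove existence and uniqueness of the minimizer via the parallelogram identity. Given a minimizing sequence $\{\tilde F_j\}\subset V$ with $\|\tilde F_j\|^{2}\to G(t)$, the midpoint $(\tilde F_i+\tilde F_j)/2$ lies in $V$, hence has norm squared $\geq G(t)$. Combined with
\[
\|\tilde F_i-\tilde F_j\|^{2}+\|\tilde F_i+\tilde F_j\|^{2}=2\|\tilde F_i\|^{2}+2\|\tilde F_j\|^{2},
\]
this gives $\|\tilde F_i-\tilde F_j\|^{2}\to 0$, so $\{\tilde F_j\}$ is Cauchy. Using property (3) of $\mathcal P_T$, $e^{-\varphi}c(-\psi)$ has a positive lower bound on every compact subset of $\{\psi<-t\}$, so the Cauchy condition implies local $L^{2}$ Cauchyness in the Euclidean sense; by the mean-value inequality for holomorphic $(n,0)$ forms, this upgrades to uniform convergence on compact subsets. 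The limit $F_t$ is therefore holomorphic on $\{\psi<-t\}$, Fatou's lemma gives $\|F_t\|^{2}\leq G(t)$, and the Noetherian closedness of coherent-sheaf sections under compact convergence (the same fact invoked in Lemma \ref{lem:0}, from \cite{G-R}) gives $(F_t-f)\in H^0(\{\psi<-t\}\cap U,\mathcal O(K_M)\otimes\mathcal F)$. Thus $F_t\in V$ and $\|F_t\|^{2}=G(t)$. Uniqueness is immediate: if $F'$ also achieves $G(t)$, the same parallelogram computation applied to $F_t$ and $F'$ yields $\|F_t-F'\|^{2}\leq 0$.

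Next, for the identity \eqref{equ:20170913e}, I would use the standard variational/orthogonality argument. Given any $\hat F\in V$ (the hypothesis on $\hat F$ says exactly $\hat F\in V$, since $\hat F-F_t=(\hat F-f)-(F_t-f)$ is a section of $\mathcal O(K_M)\otimes\mathcal F$ on $\{\psi<-t\}\cap U$), for every $\lambda\in\mathbb C$ the element $F_t+\lambda(\hat F-F_t)$ is again in $V$. The inequality
\[
\|F_t\|^{2}\leq \|F_t+\lambda(\hat F-F_t)\|^{2}=\|F_t\|^{2}+2\mathrm{Re}\bigl(\bar\lambda\langle \hat F-F_t,F_t\rangle\bigr)+|\lambda|^{2}\|\hat F-F_t\|^{2},
\]
valid for all $\lambda\in\mathbb C$, forces $\langle \hat F-F_t,F_t\rangle=0$ (choose $\lambda$ a small negative real multiple of $\overline{\langle \hat F-F_t,F_t\rangle}$ and let it tend to $0$). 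Expanding $\|\hat F\|^{2}=\|F_t+(\hat F-F_t)\|^{2}$ with this orthogonality then gives \eqref{equ:20170913e} directly.

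The only step that is not pure Hilbert-space formalism is verifying that the minimizing limit stays inside $V$, i.e.\ that both the holomorphicity on $\{\psi<-t\}$ and the coherent-sheaf membership survive the limit. Holomorphicity needs the passage from weighted $L^{2}$-Cauchy to locally uniform convergence, which requires condition (3) of $\mathcal P_T$; membership in $\mathcal O(K_M)\otimes\mathcal F$ on $\{\psi<-t\}\cap U$ requires the closedness of coherent sections under compact convergence. These are the only two non-formal inputs, and both are already used (and cited) elsewhere in the paper.
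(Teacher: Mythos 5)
Your proposal is correct and follows essentially the same route as the paper: minimize the weighted $L^{2}$ norm over the affine family $V$, use the positive lower bound of $e^{-\varphi}c(-\psi)$ on compact sets together with the closedness of coherent-sheaf sections under compact convergence to keep the limit in $V$, obtain uniqueness from the parallelogram law, and derive \eqref{equ:20170913e} from the first-order variational condition $\Re\langle \hat F-F_{t},F_{t}\rangle=0$ followed by the Pythagorean expansion. The only (harmless) difference is in the existence step: you show the minimizing sequence is Cauchy via convexity and the parallelogram identity, whereas the paper extracts a compactly convergent subsequence by a normal-families argument and controls the limit's norm via Levi's theorem; both are standard and interchangeable here.
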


\begin{proof}
Firstly, we prove the existence of $F_{t}$.
As $G(t)<+\infty$
then there exists holomorphic (n,0) forms $\{f_{j}\}_{j\in\mathbb{N}^{+}}$ on $\{\psi<-t\}$ such that
$\int_{\{\psi<-t\}}|f_{j}|^{2}e^{-\varphi}c(-\psi)\to G(t)$,
and $(f_{j}-f)\in H^{0}(\{\psi<-t\}\cap U,\mathcal{O}(K_{M})\otimes \mathcal{F})$.
Then there exists a subsequence of $\{f_{j}\}$ compact convergence to a holomorphic $(n,0)$ form $F$ on $\{\psi<-t\}$
satisfying $\int_{K}|f|^{2}e^{-\varphi}c(-\psi)\leq G(t)$ for any compact set $K\subset\{\psi<-t\}$,
which implies $ \int_{\{\psi<-t\}}|f|^{2}e^{-\varphi}c(-\psi)\leq G(t)$ by Levi's Theorem.
As $e^{-\varphi}c(-\psi)$ has positive lower bound on any compact subset of $M$,
it follows from
the closedness of the sections of coherent analytic sheaves under the topology of compact convergence (see \cite{G-R})
that $(F-f)\in H^{0}(\{\psi<-t\}\cap U,\mathcal{O}(K_{M})\otimes \mathcal{F})$.
Then we obtain the existence of $F_{t}(=F)$.

Secondly, we prove the uniqueness of $F_{t}$ by contradiction:
if not, there exist two different holomorphic (n,0) forms $f_{1}$ and $f_{2}$ on on $\{\psi<-t\}$
satisfying $\int_{\{\psi<-t\}}|f_{1}|^{2}e^{-\varphi}c(-\psi)=\int_{\{\psi<-t\}}|f_{2}|^{2}=G(t)$,
$(f_{1}-f)\in H^{0}(\{\psi<-t\}\cap U,\mathcal{O}(K_{M})\otimes \mathcal{F})$ and $(f_{2}-f)\in H^{0}(\{\psi<-t\}\cap U,\mathcal{O}(K_{M})\otimes \mathcal{F})$.
Note that
\begin{equation}
\begin{split}
&\int_{\{\psi<-t\}}|\frac{f_{1}+f_{2}}{2}|^{2}e^{-\varphi}c(-\psi)+\int_{\{\psi<-t\}}|\frac{f_{1}-f_{2}}{2}|^{2}e^{-\varphi}c(-\psi)
\\=&
\frac{\int_{\{\psi<-t\}}|f_{1}|^{2}e^{-\varphi}c(-\psi)+\int_{\{\psi<-t\}}|f_{2}|^{2}e^{-\varphi}c(-\psi)}{2}=G(t),
\end{split}
\end{equation}
then we obtain that
$$\int_{\{\psi<-t\}}|\frac{f_{1}+f_{2}}{2}|^{2}e^{-\varphi}c(-\psi)<G(t),$$
and $(\frac{f_{1}+f_{2}}{2}-f)\in H^{0}(\{\psi<-t\}\cap U,\mathcal{O}(K_{M})\otimes \mathcal{F})$, which contradicts the definition of $G(t)$.

Finally, we prove equality \ref{equ:20170913e}.
For any holomorphic $h$ on $\{\psi<-t\}$ satisfying $\int_{\{\psi<-t\}}|h|^{2}e^{-\varphi}c(-\psi)<+\infty$
and $h\in H^{0}(\{\psi<-t\}\cap U,\mathcal{O}(K_{M})\otimes \mathcal{F})$,
it is clear that
for any complex number $\alpha$,
$F_{t}+\alpha h$ satisfying $((F_{t}+\alpha h)-f)\in H^{0}(\{\psi<-t\}\cap U,\mathcal{O}(K_{M})\otimes \mathcal{F})$,
and $\int_{\{\psi<-t\}}|F_{t}|^{2}e^{-\varphi}c(-\psi)\leq\int_{\{\psi<-t\}}|F_{t}+\alpha h|^{2}e^{-\varphi}c(-\psi)<+\infty$.
Note that
$$\int_{\{\psi<-t\}}|F_{t}+\alpha h|^{2}e^{-\varphi}c(-\psi)-\int_{\{\psi<-t\}}|F_{t}|^{2}e^{-\varphi}c(-\psi)\geq 0$$
implies
$$\Re\int_{\{\psi<-t\}}F_{t}\bar{h}e^{-\varphi}c(-\psi)=0$$ by considering $\alpha\to0$,
then
$$\int_{\{\psi<-t\}}|F_{t}+h|^{2}e^{-\varphi}c(-\psi)=\int_{\{\psi<-t\}}(|F_{t}|^{2}+|h|^{2})e^{-\varphi}c(-\psi).$$
Choosing $h=\hat{F}-F_{t}$, we obtain equality \ref{equ:20170913e}.

\end{proof}

The following function shows the lower semi-continuity property of $G(t)$.

\begin{Lemma}
\label{lem:B}
Assume that $G(T)<+\infty$.
Then $G(t)$ is decreasing with respect to $t\in[T,+\infty)$,
such that
$\lim_{t\to t_{0}+0}G(t)=G(t_{0})$ $(t_{0}\in[T,+\infty))$,
$\lim_{t\to t_{0}-0}G(t)\geq G(t_{0})$ $(t_{0}\in(T,+\infty))$,
and $\lim_{t\to +\infty}G(t)=0$, where $t_{0}\in[T,+\infty)$.
Especially $G(t)$ is lower semi-continuous on $[T,+\infty)$.
\end{Lemma}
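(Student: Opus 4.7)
The plan is to establish the four assertions of Lemma \ref{lem:B} in order, leaning on Lemma \ref{lem:A} for existence of minimizers and on the closedness of coherent sheaf sections under compact convergence. Monotonicity is immediate: for $T\leq t_1<t_2$, any admissible competitor on $\{\psi<-t_1\}$ restricts to an admissible competitor on $\{\psi<-t_2\}$, and the integral can only decrease when the domain shrinks, so $G(t_2)\leq G(t_1)$. For $\lim_{t\to+\infty}G(t)=0$, since $G(T)<+\infty$ Lemma \ref{lem:A} produces a minimizer $F_T$ on $\{\psi<-T\}$; its restriction is admissible for each $G(t)$, so $G(t)\leq\int_{\{\psi<-t\}}|F_T|^{2}e^{-\varphi}c(-\psi)$, and as $t\to+\infty$ the domains decrease to $\{\psi=-\infty\}$, a set of Lebesgue measure zero (we may assume $\psi\not\equiv-\infty$), so dominated convergence gives the bound tending to $0$.

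The main step is right continuity at $t_0\in[T,+\infty)$. Monotonicity already gives $\lim_{t\to t_0+}G(t)\leq G(t_0)$, so I would prove the reverse inequality by a normal-family argument. Pick $t_j\downarrow t_0$ and let $F_{t_j}$ be the Lemma \ref{lem:A} minimizer on $\{\psi<-t_j\}$, satisfying $\int_{\{\psi<-t_j\}}|F_{t_j}|^{2}e^{-\varphi}c(-\psi)=G(t_j)\leq G(t_0)$. The domains $\{\psi<-t_j\}$ exhaust $\{\psi<-t_0\}$, so any compact $K\subset\{\psi<-t_0\}$ lies in $\{\psi<-t_j\}$ for large $j$; using condition (3) of the class $\mathcal{P}_{T}$, which gives a positive lower bound for $e^{-\varphi}c(-\psi)$ on $K$, one obtains a uniform bound $\int_K|F_{t_j}|^{2}\leq C_K$, and by Montel's theorem a subsequence converges locally uniformly on $\{\psi<-t_0\}$ to a holomorphic $(n,0)$-form $F$. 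Closedness of sections of coherent analytic sheaves under compact convergence (as already used in Lemmas \ref{lem:0} and \ref{lem:A}) forces $(F-f)\in H^{0}(\{\psi<-t_0\}\cap U,\mathcal{O}(K_M)\otimes\mathcal{F})$. Fatou's lemma applied to $\mathbb{I}_{\{\psi<-t_j\}}|F_{t_j}|^{2}e^{-\varphi}c(-\psi)$ then yields $G(t_0)\leq\int_{\{\psi<-t_0\}}|F|^{2}e^{-\varphi}c(-\psi)\leq\liminf_j G(t_j)$, as required.

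For $t_0\in(T,+\infty)$, the bound $\lim_{t\to t_0-}G(t)\geq G(t_0)$ is just a reformulation of the monotonicity inequality $G(t)\geq G(t_0)$ for $t<t_0$, so nothing further is needed there. Combining this with the right-continuity statement at each $t_0\in[T,+\infty)$ gives $\liminf_{t\to t_0}G(t)\geq G(t_0)$, which is lower semi-continuity on $[T,+\infty)$. The only genuine obstacle is the right-continuity step: it requires synchronizing a normal-family extraction across the varying domains $\{\psi<-t_j\}$ with preservation of the coherent-sheaf membership in the limit, which is precisely what the positivity hypothesis (3) in $\mathcal{P}_T$ and the closedness of coherent sheaf sections under compact convergence are tailored to supply.
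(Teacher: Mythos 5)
Your proof is correct and follows essentially the same route as the paper: monotonicity by restriction of competitors, and right-continuity via a normal-family extraction of the minimizers $F_{t_j}$ on the exhausting domains $\{\psi<-t_j\}$, using the positive lower bound of $e^{-\varphi}c(-\psi)$ on compacts, closedness of coherent sheaf sections under compact convergence, and Fatou/Levi to pass to the limit. You additionally spell out the verification of $\lim_{t\to+\infty}G(t)=0$ and the sheaf membership of the limit form, details the paper leaves implicit.
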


\begin{proof}
By the definition of $G(t)$,
it is clear that $G(t)$ is decreasing on $[T,+\infty)$ and $\lim_{t\to t_{0}-0}G(t)\geq G(t_{0})$.
It suffices to prove $\lim_{t\to t_{0}+0}G(t)=G(t_{0}).$
We prove it by contradiction:
if not,
then $\lim_{t\to t_{0}+0}G(t)<G(t_{0})$.

By Lemma \ref{lem:A}, there exists a unique holomorphic (n,0) form $F_{t}$ on
$\{\psi<-t\}$ satisfying $(F_{t}-f)\in H^{0}(\{\psi<-t\}\cap U,\mathcal{O}(K_{M})\otimes \mathcal{F})$ and
$\int_{\{\psi<-t\}}|F_{t}|^{2}e^{-\varphi}c(-\psi)=G(t)$.
Note that $G(t)$ is decreasing implies that
$\int_{\{\psi<-t\}}|F_{t}|^{2}e^{-\varphi}c(-\psi)\leq\lim_{t\to t_{0}+0}G(t)$ for any $t>t_{0}$.
As $e^{-\varphi}c(-\psi)$ has positive lower bound on any compact subset of $M$,
for any compact subset $K$ of $\{\psi<-t_{0}\}$,
there exists $\{F_{t_{j}}\}$ $(t_{j}\to t_{0}+0,$ as $j\to+\infty)$
uniformly convergent on $K$.
Then there exists a subsequence of $\{F_{t_{j}}\}$ (also denoted by $\{F_{t_{j}}\}$) convergent on
any compact subset of $\{\psi<-t_{0}\}$.

Let $\hat{F}_{t_{0}}:=\lim_{j\to+\infty}F_{t_{j}}$, which is a holomorphic $(n,0)$ form on $\{\psi<-t_{0}\}$.
Then it follows from the decreasing property of $G(t)$ that
$$\int_{K}|\hat{F}_{t_{0}}|^{2}e^{-\varphi}c(-\psi)\leq \lim_{j\to+\infty}\int_{K}|F_{t_{j}}|^{2}e^{-\varphi}c(-\psi)\leq
\lim_{j\to+\infty}G(t_{j})\leq\lim_{t\to t_{0}+0}G(t)$$
for any
compact set $K\subset \{\psi<-t_{0}\}$.
It follows from Levi's theorem that
$$\int_{M}|\hat{F}_{t_{0}}|^{2}e^{-\varphi}c(-\psi)\leq \lim_{t\to t_{0}+0}G(t).$$
Then we obtain that $G_{t_0}\leq\int_{M}|\hat{F}_{t_{0}}|^{2}e^{-\varphi}c(-\psi)\leq \lim_{t\to t_{0}+0}G(t)$,
which contradicts $\lim_{t\to t_{0}+0}G(t)<G(t_{0})$.
\end{proof}

We consider the derivatives of $G(t)$ in the following lemma.

\begin{Lemma}
\label{lem:C}
Assume that $G(T)<+\infty$.
Then for any $t_{0}\in(T,+\infty)$,
we have
$$\frac{G(T)-G(t_{0})}{\int_{T}^{+\infty}c(t)e^{-t}dt-\int_{t_{0}}^{+\infty}c(t)e^{t}dt}\leq
\frac{\liminf_{B\to0+0}(\frac{G(t_{0})-G(t_{0}+B)}{B})}{c(t_{0})e^{-t_{0}}}.$$
\end{Lemma}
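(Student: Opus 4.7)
The plan is to construct, for each small $B>0$, a competitor $\tilde F_B$ for $G(T)$ out of the level-$t_0$ minimizer $F_{t_0}$ by extending via Lemma \ref{lem:GZ_sharp}, then to estimate the excess $G(T) - G(t_0)$ using the orthogonality in Lemma \ref{lem:A}; passing to $\liminf$ as $B\to 0^+$ then gives the inequality.

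First, assume $G(T)<+\infty$, so that by Lemma \ref{lem:B} also $G(t_0)<+\infty$, and by Lemma \ref{lem:A} there is a unique minimizer $F_{t_0}$ on $\{\psi<-t_0\}$ with $\int_{\{\psi<-t_0\}}|F_{t_0}|^2 e^{-\varphi}c(-\psi) = G(t_0)$. For each small $B>0$, set
$$C_B := \frac{1}{B}\int_{\{-t_0-B<\psi<-t_0\}}|F_{t_0}|^2 e^{-\varphi},$$
which is finite since $c(-\psi)$ admits a positive lower bound on the compact shell and $G(t_0)<\infty$. Applying Lemma \ref{lem:GZ_sharp} with $F = F_{t_0}$ and cutoff parameters $t_0, B$ would yield a holomorphic $(n,0)$ form $\tilde F_B$ on $M$ with
$$\int_M|\tilde F_B - (1-b(\psi))F_{t_0}|^2 e^{-\varphi+v(\psi)}c(-v(\psi)) \leq C_B\int_T^{t_0+B}c(t)e^{-t}dt,$$
and with $\tilde F_B - f \in H^0(U, \mathcal{O}(K_M)\otimes\mathcal{F})$ (inherited from $F_{t_0}-f \in \mathcal{F}$ on $\{\psi<-t_0\}\cap U$ via the ideal-theoretic features of the extension), so that $\tilde F_B$ is a competitor for $G(T)$.

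Next I would decompose $M = \{\psi<-t_0\}\cup\{\psi\geq-t_0\}$. On $\{\psi\geq-t_0\}$ the cutoff gives $(1-b(\psi))F_{t_0}=0$ and $v(\psi)=\psi$, so combined with $e^{-\psi}\leq e^{t_0}$ one obtains
$$\int_{\{\psi\geq-t_0\}}|\tilde F_B|^2 e^{-\varphi}c(-\psi) \leq e^{t_0}C_B\int_T^{t_0+B}c(t)e^{-t}dt.$$
On $\{\psi<-t_0\}$, Lemma \ref{lem:A} at level $t_0$ applied to $\tilde F_B$ (a competitor for $G(t_0)$) gives
$$\int_{\{\psi<-t_0\}}|\tilde F_B|^2 e^{-\varphi}c(-\psi) = G(t_0) + \int_{\{\psi<-t_0\}}|\tilde F_B-F_{t_0}|^2 e^{-\varphi}c(-\psi),$$
so summing and using $G(T) \leq \int_M|\tilde F_B|^2 e^{-\varphi}c(-\psi)$ produces
$$G(T) - G(t_0) \leq \int_{\{\psi<-t_0\}}|\tilde F_B-F_{t_0}|^2 e^{-\varphi}c(-\psi) + e^{t_0}C_B\int_T^{t_0+B}c(t)e^{-t}dt.$$
A further application of Lemma \ref{lem:A}, at level $t_0+B$ to $F_{t_0}|_{\{\psi<-t_0-B\}}$, yields $\int_{\{-t_0-B<\psi<-t_0\}}|F_{t_0}|^2 e^{-\varphi}c(-\psi) \leq G(t_0)-G(t_0+B)$, and since $c(-\psi) \geq \min_{s\in[t_0,t_0+B]}c(s)$ on the shell we deduce $C_B \leq (G(t_0)-G(t_0+B))/(B\min_{[t_0,t_0+B]}c)$. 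Taking $\liminf_{B\to 0^+}$, with $\min_{[t_0,t_0+B]}c \to c(t_0)$ and $\int_T^{t_0+B}c(t)e^{-t}dt \to \int_T^{+\infty}c(t)e^{-t}dt - \int_{t_0}^{+\infty}c(t)e^{-t}dt$, and provided the first error vanishes along the realizing sequence, the desired inequality follows.

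The hard part will be showing that the first error term $\int_{\{\psi<-t_0\}}|\tilde F_B - F_{t_0}|^2 e^{-\varphi}c(-\psi)$ tends to $0$ along the $\liminf$-realizing sequence for $B$. Lemma \ref{lem:GZ_sharp} only controls this quantity with weight $e^{-\varphi+v(\psi)}c(-v(\psi))$, which on $\{\psi<-t_0-B\}$ collapses to a constant times $e^{-\varphi}$ and lacks the $c(-\psi)$ factor; when $c$ may be unbounded at infinity, converting to a genuine $c(-\psi)$-weighted bound requires exploiting that the Lemma \ref{lem:GZ_sharp} right-hand side is $O(B)$ under the $\liminf$ hypothesis on $(G(t_0)-G(t_0+B))/B$, so that $\tilde F_B \to F_{t_0}$ locally on $\{\psi<-t_0\}$, and then deploying a compactness or dominated-convergence argument to absorb the missing $c$-factor in the limit.
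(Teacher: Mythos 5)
Your skeleton is the paper's: build a competitor from $F_{t_0}$ via Lemma \ref{lem:GZ_sharp}, use the orthogonality identity of Lemma \ref{lem:A}, bound the shell mass by $G(t_0)-G(t_0+B)$, and pass to the $\liminf$. But two points are genuinely off. First, Lemma \ref{lem:GZ_sharp} requires the weight to be plurisubharmonic, and in the setting of Section \ref{sec:main} only $\varphi+\psi$ is plurisubharmonic, not $\varphi$; the paper applies the lemma with $\varphi$ replaced by $\varphi+\psi$. This substitution is not a technicality: it turns the output weight into $e^{-\varphi}\cdot e^{-\psi+v(\psi)}c(-v(\psi))$, and since $v(t)\geq t$ and $c(t)e^{-t}$ is decreasing one has $e^{-\psi+v(\psi)}c(-v(\psi))\geq c(-\psi)$ on all of $M$. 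Hence the estimate of Lemma \ref{lem:GZ_sharp} already controls $\int_M|\tilde F_B-(1-b(\psi))F_{t_0}|^2e^{-\varphi}c(-\psi)$, and your ``hard part'' about the missing $c$-factor on $\{\psi<-t_0\}$ does not arise. (The substitution is also what places $\tilde F_B-F_{t_0}$ in $\mathcal{I}(\varphi+\psi)\subseteq\mathcal{F}$ rather than merely in $\mathcal{I}(\varphi)$, which is what the hypothesis $\mathcal{F}\supseteq\mathcal{I}(\varphi+\psi)|_U$ requires.)

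Second, and more seriously, the plan to show $\int_{\{\psi<-t_0\}}|\tilde F_B-F_{t_0}|^2e^{-\varphi}c(-\psi)\to0$ would fail: by the orthogonality identity this error equals $\int_{\{\psi<-t_0\}}|\tilde F_B|^2e^{-\varphi}c(-\psi)-G(t_0)$, and along the realizing sequence $\tilde F_B$ converges to a global extension whose restriction to $\{\psi<-t_0\}$ is in general \emph{not} the minimizer $F_{t_0}$; the error tends to a strictly positive limit except in the degenerate (linear) situation. The correct move is not to kill this term but to keep it on the left-hand side: the full Lemma \ref{lem:GZ_sharp} bound (with the corrected weight) controls the sum $\int_{\{\psi\geq-t_0\}}|\tilde F_B|^2e^{-\varphi}c(-\psi)+\int_{\{\psi<-t_0\}}|\tilde F_B-(1-b(\psi))F_{t_0}|^2e^{-\varphi}c(-\psi)$; the second summand differs from $\int_{\{\psi<-t_0\}}|\tilde F_B-F_{t_0}|^2e^{-\varphi}c(-\psi)$ only by cross terms supported on the shell $\{-t_0-B<\psi<-t_0\}$, which do vanish as $B\to0$; and the sum of the two error terms is $\int_M|\tilde F_B|^2e^{-\varphi}c(-\psi)-G(t_0)\geq G(T)-G(t_0)$. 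That single chain, which is inequality \eqref{equ:20170912b} of the paper, yields the lemma; you used only the $\{\psi\geq-t_0\}$ half of the extension estimate and then had to ask the other half to vanish, which it does not.
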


\begin{proof}
By Lemma \ref{lem:A},
there exists a
holomorphic $(n,0)$ form $F_{t_{0}}$ on $\{\psi<t_{0}\}$, such that
$(F_{t_{0}}-f)\in H^{0}(\{\psi<-t_{0}\}\cap U,\mathcal{O}(K_{M})\otimes \mathcal{F})$ and
$\int_{\{\psi<-t_{0}\}}|F_{t_{0}}|^{2}e^{-\varphi}c(-\psi)=G(t_{0})$.

It suffices to consider that $\liminf_{B\to0+0}\frac{G(t_{0})-G(t_{0}+B)}{B}\in(-\infty,0]$
because of the decreasing property of $G(t)$.
Then there exists $B_{j}\to 0+0$ $(j\to+\infty)$ such that
$$\lim_{j\to+\infty}\frac{G(t_{0})-G(t_{0}+B_{j})}{B_{j}}=\liminf_{B\to0+0}\frac{G(t_{0})-G(t_{0}+B)}{B}$$
and $\{\frac{G(t_{0})-G(t_{0}+B_{j})}{B_{j}}\}_{j\in\mathbb{N}^{+}}$ is bounded.

As $t\leq v(t)$, the decreasing property of $c(t)e^{-t}$ shows that
$$c(t)e^{-t}\geq c(-v(-t))e^{v(-t)}$$ for any $t\geq 0$,
which implies
$$e^{-\psi+v(\psi)}c(-v(\psi))\geq c(-\psi).$$
Lemma \ref{lem:GZ_sharp} $(\varphi\sim\varphi+\psi$ here $\sim$ means the former replaced by the latter
and the notation will be used through out the paper$)$
shows that for any $B_{j}$,
there exists
holomorphic $(n,0)$ form $\tilde{F}_{j}$ on $M$, such that
$(\tilde{F}_{j}-F_{t_{0}})\in H^{0}(\{\psi<-t_{0}\}\cap U,\mathcal{O}(K_{M})\otimes \mathcal{I}(\varphi+\psi))
\subseteq H^{0}(\{\psi<-t_{0}\}\cap U,\mathcal{O}(K_{M})\otimes \mathcal{F})$
$(\Rightarrow (\tilde{F}_{j}-f)\in H^{0}(\{\psi<-t_{0}\}\cap U,\mathcal{O}(K_{M})\otimes \mathcal{F})$
and
\begin{equation}
\label{equ:GZc}
\begin{split}
&\int_{M}|\tilde{F}_{j}-(1-b_{t_{0},B_{j}}(\psi))F_{t_{0}}|^{2}e^{-\varphi}e^{-\psi+v(\psi)}c(-v(\psi))
\\\leq&\int_{M}|\tilde{F}_{j}-(1-b_{t_{0},B_{j}}(\psi))F_{t_{0}}|^{2}e^{-\varphi}c(-\psi)
\\\leq&
\int_{T}^{t_{0}+B_{j}}c(t)e^{-t}dt
\int_{M}\frac{1}{B_{j}}(\mathbb{I}_{\{-t_{0}-B_{j}<\psi<-t_{0}\}})|F_{t_{0}}|^{2}e^{-\varphi-\psi}
\\\leq&
\frac{e^{t_{0}+B_{j}}\int_{T}^{t_{0}+B_{j}}c(t)e^{-t}dt}{\inf_{t\in(t_{0},t_{0}+B_{j})}c(t)}
\int_{M}\frac{1}{B_{j}}(\mathbb{I}_{\{-t_{0}-B_{j}<\psi<-t_{0}\}})|F_{t_{0}}|^{2}e^{-\varphi}c(-\psi)
\\\leq&
\frac{e^{t_{0}+B_{j}}\int_{T}^{t_{0}+B_{j}}c(t)e^{-t}dt}{\inf_{t\in(t_{0},t_{0}+B_{j})}c(t)}
\times(\int_{M}\frac{1}{B_{j}}\mathbb{I}_{\{\psi<-t_{0}\}}|F_{t_{0}}|^{2}e^{-\varphi}c(-\psi)
\\&-\int_{M}\frac{1}{B_{j}}\mathbb{I}_{\{\psi<-t_{0}-B_{j}\}}|F_{t_{0}}|^{2}e^{-\varphi}c(-\psi))
\\\leq&
\frac{e^{t_{0}+B_{j}}\int_{T}^{t_{0}+B_{j}}c(t)e^{-t}dt}{\inf_{t\in(t_{0},t_{0}+B_{j})}c(t)}
\times\frac{G(t_{0})-G(t_{0}+B_{j})}{B_{j}}
\end{split}
\end{equation}

Firstly, we will prove that $\int_{M}|\tilde{F}_{j}|^{2}e^{-\varphi}c(-\psi)$ is bounded with respect to $j$.

Note that
\begin{equation}
\label{equ:GZd}
\begin{split}
&(\int_{M}|\tilde{F}_{j}-(1-b_{t_{0},B_{j}}(\psi))F_{t_{0}}|^{2}e^{-\varphi}c(-\psi))^{1/2}
\\\geq&(\int_{M}|\tilde{F}_{j}|^{2}e^{-\varphi}c(-\psi))^{1/2}-(\int_{M}|(1-b_{t_{0},B_{j}}(\psi))F_{t_{0}}|^{2}e^{-\varphi}c(-\psi))^{1/2}
\end{split}
\end{equation}
then it follows from inequality \ref{equ:GZc} that
\begin{equation}
\label{equ:GZe}
\begin{split}
&(\int_{M}|\tilde{F}_{j}|^{2}e^{-\varphi}c(-\psi))^{1/2}
\\\leq&(\frac{e^{t_{0}+B_{j}}\int_{T}^{t_{0}+B_{j}}c(t)e^{-t}dt}{\inf_{t\in(t_{0},t_{0}+B_{j})}c(t)})^{1/2}
(\frac{G(t_{0})-G(t_{0}+B_{j})}{B_{j}})^{1/2}
\\&+(\int_{M}|(1-b_{t_{0},B_{j}}(\psi))F_{t_{0}}|^{2}e^{-\varphi}c(-\psi))^{1/2}.
\end{split}
\end{equation}
Since $\{\frac{G(t_{0}+B_{j})-G(t_{0})}{B_{j}}\}_{j\in\mathbb{N}^{+}}$ is bounded and $0\leq b_{t_{0},B_{j}}(\psi)\leq 1$,
then $\int_{M}|\tilde{F}_{j}|^{2}$ is bounded with respect to $j$.

\

Secondly, we will prove the main result.

It follows from $b_{t_0}(\psi)=1$ on $\{\psi\geq -t_{0}\}$ that
\begin{equation}
\label{equ:20170915b}
\begin{split}
&\int_{M}|\tilde{F}_{j}-(1-b_{t_{0},B_{j}}(\psi))F_{t_{0}}|^{2}e^{-\varphi}c(-\psi)
\\=&\int_{\{\psi\geq-t_{0}\}}|\tilde{F}_{j}|^{2}e^{-\varphi}c(-\psi)+\int_{\{\psi<-t_{0}\}}|\tilde{F}_{j}-(1-b_{t_{0},B_{j}}(\psi))F_{t_{0}}|^{2}e^{-\varphi}c(-\psi)
\end{split}
\end{equation}

Denote that $||\cdot||_{2}:=(\int_{\{\psi<-t_{0}\}}|\cdot|^{2}e^{-\varphi}c(-\psi))^{1/2}$.
It is clear that
\begin{equation}
\label{equ:20170915c}
\begin{split}
&||\tilde{F}_{j}-(1-b_{t_0}(\psi))F_{t_{0}}||_{2}^{2}
\\\geq&(||\tilde{F}_{j}-F_{t_{0}}||_{2}-||b_{t_0,B_{j}}(\psi)F_{t_{0}}||_{2})^{2}
\\\geq&||\tilde{F}_{j}-F_{t_{0}}||_{2}^{2}-2||\tilde{F}_{j}-F_{t_{0}}||_{2}||b_{t_0,B_{j}}(\psi)F_{t_{0}}||_{2}
\\\geq&
||\tilde{F}_{j}-F_{t_{0}}||_{2}^{2}-2||\tilde{F}_{j}-F_{t_{0}}||_{2}
(\int_{\{-t_{0}-B_{j}<\psi<-t_{0}\}}|F_{t_{0}}|^{2}e^{-\varphi}c(-\psi))^{1/2},
\end{split}
\end{equation}
where the last inequality follows from $0\leq b_{t_{0},B_{j}}(\psi)\leq 1$ and $b_{t_{0},B_{j}}(\psi)=0$ on $\{\psi\leq -t_{0}-B_{0}\}$.

Combining equality \ref{equ:20170915b}, inequality \ref{equ:20170915c} and equality \ref{equ:20170913e},
we obtain that
\begin{equation}
\label{equ:20170915e}
\begin{split}
&\int_{M}|\tilde{F}_{j}-(1-b_{t_{0},B_{j}}(\psi))F_{t_{0}}|^{2}e^{-\varphi}c(-\psi)
\\=&\int_{\{\psi\geq-t_{0}\}}|\tilde{F}_{j}|^{2}e^{-\varphi}c(-\psi)+||\tilde{F}_{j}-(1-b_{t_0}(\psi))F_{t_{0}}||_{2}^{2}
\\\geq&
\int_{\{\psi\geq-t_{0}\}}|\tilde{F}_{j}|^{2}e^{-\varphi}c(-\psi)+||\tilde{F}_{j}-F_{t_{0}}||_{2}^{2}-2||\tilde{F}_{j}-F_{t_{0}}||_{2}||b_{t_0,B_{j}}(\psi)F_{t_{0}}||_{2}
\\\geq&
\int_{\{\psi\geq-t_{0}\}}|\tilde{F}_{j}|^{2}e^{-\varphi}c(-\psi)+
||\tilde{F}_{j}||_{2}^{2}-||F_{t_{0}}||_{2}^{2}
\\&-2||\tilde{F}_{j}-F_{t_{0}}||_{2}
(\int_{\{-t_{0}-B_{j}<\psi<-t_{0}\}}|F_{t_{0}}|^{2}e^{-\varphi}c(-\psi))^{1/2}
\\=&
\int_{M}|\tilde{F}_{j}|^{2}e^{-\varphi}c(-\psi)-||F_{t_{0}}||_{2}^{2}
\\&-2||\tilde{F}_{j}-F_{t_{0}}||_{2}
(\int_{\{-t_{0}-B_{j}<\psi<-t_{0}\}}|F_{t_{0}}|^{2}e^{-\varphi}c(-\psi))^{1/2}.
\end{split}
\end{equation}

It follows from equality \ref{equ:20170913e} that
\begin{equation}
\label{equ:20170917a}
\begin{split}
||\tilde{F}_{j}-F_{t_{0}}||_{2}
=(||\tilde{F}_{j}||_{2}^{2}-||F_{t_{0}}||_{2}^{2})^{1/2}
\leq||\tilde{F}_{j}||_{2}\leq(\int_{M}|\tilde{F}_{j}|^{2}e^{-\varphi}c(-\psi))^{1/2}.
\end{split}
\end{equation}
Since $\int_{M}|\tilde{F}_{j}|^{2}e^{-\varphi}c(-\psi)$ is bounded with respect to $j$,
inequality \ref{equ:20170917a} implies that
$(\int_{\{\psi<-t_{0}\}}|\tilde{F}_{j}-F_{t_{0}}|^{2}e^{-\varphi}c(-\psi))^{1/2}$ is bounded with respect to $j$.
Using the dominated convergence theorem and $\int_{\{\psi<-t_{0}\}}|F_{t_{0}}|^{2}e^{-\varphi}c(-\psi)=G(t_{0})\leq G(0)<+\infty$,
we obtain that
$\lim_{j\to +\infty}\int_{\{-t_{0}-B_{j}<\psi<-t_{0}\}}|F_{t_{0}}|^{2}e^{-\varphi}c(-\psi)=0.$
Then
$$\lim_{j\to +\infty}||\tilde{F}_{j}-F_{t_{0}}||_{2}
(\int_{\{-t_{0}-B_{j}<\psi <-t_{0}\}}|F_{t_{0}}|^{2}e^{-\varphi}c(-\psi))^{1/2}=0.$$
Combining with inequality \ref{equ:20170915e},
we obtain
\begin{equation}
\label{equ:20170916a}
\begin{split}
&\liminf_{j\to+\infty}\int_{M}|\tilde{F}_{j}-(1-b_{t_{0},B_{j}}(\psi))F_{t_{0}}|^{2}e^{-\varphi}c(-\psi)
\\\geq&\liminf_{j\to+\infty}\int_{M}|\tilde{F}_{j}|^{2}e^{-\varphi}c(-\psi)-||F_{t_{0}}||_{2}^{2}.
\end{split}
\end{equation}

Using inequality \ref{equ:GZc} (3rd $``\geq"$) and inequality \ref{equ:20170916a} (4th $``\geq"$), we obtain
\begin{equation}
\label{equ:20170912b}
\begin{split}
&\frac{\int_{T}^{t_{0}}c(t)e^{-t}dt}{c(t_{0})e^{-t_{0}}}
\lim_{j\to+\infty}(\frac{G(t_{0})-G(t_{0}+B_{j})}{B_{j}})
\\=&\lim_{j\to+\infty}\frac{e^{t_{0}+B_{j}}\int_{T}^{t_{0}+B_{j}}c(t)e^{-t}dt}{\inf_{t\in(t_{0},t_{0}+B_{j})}c(t)}
(\frac{G(t_{0})-G(t_{0}+B_{j})}{B_{j}})
\\\geq&\liminf_{j\to+\infty}\frac{e^{t_{0}+B_{j}}\int_{T}^{t_{0}+B_{j}}c(t)e^{-t}dt}{\inf_{t\in(t_{0},t_{0}+B_{j})}c(t)}
\int_{M}\frac{1}{B_{j}}(\mathbb{I}_{\{-t_{0}-B<\psi<-t_{0}\}})|F_{t_{0}}|^{2}e^{-\varphi}c(-\psi)
\\\geq&\liminf_{j\to+\infty}\int_{M}|\tilde{F}_{j}-(1-b_{t_{0},B_{j}}(\psi))F_{t_{0}}|^{2}e^{-\varphi}c(-\psi)
\\\geq&\liminf_{j\to+\infty}\int_{M}|\tilde{F}_{j}|^{2}e^{-\varphi}c(-\psi)-||F_{t_{0}}||_{2}^{2}
\\\geq&G(T)-G(t_{0}).
\end{split}
\end{equation}
This proves Lemma \ref{lem:C}.
\end{proof}

Lemma \ref{lem:C} implies the following lemma.

\begin{Lemma}
\label{lem:Da}
Assume that $G(T)<+\infty$.
Then for any $t_{0},t_{1}\in[T,+\infty)$,
we have
$$\frac{G(t_{1})-G(t_{1}+t_{0})}{\int_{t_{1}}^{t_{1}+t_{0}}c(t)e^{-t}dt}\leq \frac{\liminf_{B\to0+0}(\frac{G(t_{0}+t_{1})-G(t_{0}+t_{1}+B)}{B})}{c(t_{0})e^{-t_{0}}},$$
i.e.
\begin{equation}
\label{equ:20171012b}
\begin{split}
&\frac{G(t_{1})-G(t_{1}+t_{0})}{\int_{t_{1}}^{+\infty}c(t)e^{-t}dt-\int_{t_{1}+t_{0}}^{+\infty}c(t)e^{-t}dt}
\\\leq&\liminf_{B\to0+0}
\frac{G(t_{0}+t_{1})-G(t_{0}+t_{1}+B)}{\int_{t_{1}+t_{0}}^{+\infty}c(t)e^{-t}dt-\int_{t_{1}+t_{0}+B}^{+\infty}c(t)e^{-t}dt}
\end{split}
\end{equation}
\end{Lemma}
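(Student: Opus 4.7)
The strategy is to reduce the statement directly to Lemma \ref{lem:C} by translating the base threshold from $T$ to $t_1$. The key observation is that $G(t)$, by its very definition, depends only on the data of an ambient Stein manifold together with a plurisubharmonic function bounded above by a negative constant, plus the auxiliary package $(\varphi,f,U,\mathcal{F},c)$. Since Lemma \ref{lem:C} produces an upper bound of a difference quotient in terms of the ambient lower-threshold constant, applying it after shifting the threshold will give exactly the asserted inequality.

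Concretely, I would replace the ambient Stein manifold $M$ by the open sublevel set $M':=\{\psi<-t_1\}$, which is itself Stein (sublevel sets of plurisubharmonic functions on Stein manifolds are Stein). On $M'$, the function $\psi|_{M'}$ is plurisubharmonic and satisfies $\psi|_{M'}<-t_1$, so $t_1$ takes the role of the new lower threshold. The class condition is also inherited: if $c\in\mathcal{P}_T$, then $c\in\mathcal{P}_{t_1}$, because the integrability and monotonicity of $c(t)e^{-t}$ on $(t_1,+\infty)$ follow from those on $(T,+\infty)$, and the positive lower bound of $e^{-\varphi}c(-\psi)$ on compact subsets of $M'$ follows from the corresponding bound on compact subsets of $M$. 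The finiteness hypothesis $G'(t_1)<+\infty$ needed for Lemma \ref{lem:C} is immediate, since $G(t_1)\leq G(T)<+\infty$ by the monotonicity in Lemma \ref{lem:B}.

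I would then observe that for every $t\geq t_1$ the restricted minimal $L^2$ integral $G'(t)$ agrees with the original $G(t)$. Indeed $\{\psi<-t\}\subseteq M'$ for $t\geq t_1$, and the admissible competitors $\tilde f$ are holomorphic $(n,0)$ forms on the same sublevel set $\{\psi<-t\}$ satisfying the same interface condition on $U\cap\{\psi<-t\}$. Applying Lemma \ref{lem:C} to the restricted data, with $T$ replaced by $t_1$ and $t_0$ replaced by $t_0+t_1$, produces
\begin{equation*}
\frac{G(t_1)-G(t_1+t_0)}{\int_{t_1}^{+\infty}c(t)e^{-t}dt-\int_{t_1+t_0}^{+\infty}c(t)e^{-t}dt}\leq
\frac{\liminf_{B\to 0+0}\bigl(\tfrac{G(t_1+t_0)-G(t_1+t_0+B)}{B}\bigr)}{c(t_0+t_1)e^{-(t_0+t_1)}},
\end{equation*}
which is precisely inequality (\ref{equ:20171012b}) after rewriting $\int_{t_1+t_0}^{t_1+t_0+B}c(t)e^{-t}dt$ as $c(t_0+t_1)e^{-(t_0+t_1)}B(1+o(1))$ as $B\to 0^+$.

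I do not expect any serious obstacle: the whole proof is essentially a change-of-variables argument. The only small subtlety is the bookkeeping of the translation — in particular checking that each hypothesis used in Lemma \ref{lem:C} is stable under restriction to the sub-Stein open $M'$ and reinterpretation of the threshold — and noticing that the form of the right-hand side that actually drops out of the shifted Lemma \ref{lem:C} has $c(t_0+t_1)e^{-(t_0+t_1)}$ rather than $c(t_0)e^{-t_0}$ in its denominator, matching the ``i.e.''\ version (\ref{equ:20171012b}) that the statement reformulates.
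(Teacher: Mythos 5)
Your proposal is correct and is exactly the derivation the paper intends: the paper states Lemma \ref{lem:Da} as an immediate consequence of Lemma \ref{lem:C}, and the implicit mechanism is precisely your translation of the threshold, replacing the ambient Stein manifold $M$ by the Stein open set $\{\psi<-t_{1}\}$ and noting that the minimal $L^{2}$ integrals for $t\geq t_{1}$ are unchanged. You also correctly identify that the denominator in the first display should read $c(t_{0}+t_{1})e^{-(t_{0}+t_{1})}$, consistent with the ``i.e.''\ form \eqref{equ:20171012b}.
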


\subsection{Proof of Theorem \ref{thm:general_concave}}

As $G(g^{-1}(r);c)$ is lower semicontinuous (Lemma \ref{lem:B}),
then it follows from the following well-known property of concave functions (Lemma \ref{lem:Ea}) that
Lemma \ref{lem:Da} implies Theorem \ref{thm:general_concave}.

\begin{Lemma}
\label{lem:Ea}
Let $a(r)$ be a lower semicontinuous function on $(0,R]$.
Then $a(r)$ is concave if and only if
$$\frac{a(r_{1})-a(r_{2})}{r_{1}-r_{2}}\leq \liminf_{r_{3}\to r_{2}-0}\frac{a(r_{3})-a(r_{2})}{r_{3}-r_{2}},$$
holds for any $0< r_{2}<r_{1}\leq R$.
\end{Lemma}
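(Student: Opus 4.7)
My plan is to prove both directions separately, with the lower semicontinuity hypothesis playing its essential role only in the nontrivial direction.

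For the forward direction, assuming $a$ is concave on $(0,R]$, I would use the standard fact that chord slopes of a concave function are monotone: for $0 < r_{3} < r_{2} < r_{1} \leq R$,
\[
\frac{a(r_{3})-a(r_{2})}{r_{3}-r_{2}} \;=\; \frac{a(r_{2})-a(r_{3})}{r_{2}-r_{3}} \;\geq\; \frac{a(r_{1})-a(r_{2})}{r_{1}-r_{2}}.
\]
Taking $\liminf_{r_{3}\to r_{2}-0}$ on the left preserves this inequality and yields the stated one. (No semicontinuity is needed here.)

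For the reverse direction, I would reduce concavity to showing the chord inequality $a(r_{2}) \geq L(r_{2})$ for every $r_{2}\in(r_{3},r_{1})$, where $L$ is the affine function on $[r_{3},r_{1}]$ with $L(r_{3})=a(r_{3})$ and $L(r_{1})=a(r_{1})$. Fix $r_{3}<r_{1}$ in $(0,R]$ and set $h:=a-L$ on $[r_{3},r_{1}]$. Since $a$ is lower semicontinuous and $L$ is continuous, $h$ is lower semicontinuous on the compact set $[r_{3},r_{1}]$ and therefore attains its infimum at some $r_{2}^{\ast}$. If $r_{2}^{\ast}\in\{r_{3},r_{1}\}$ then $h(r_{2}^{\ast})=0$ and $h\geq 0$ on the whole interval, as desired. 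Otherwise $r_{2}^{\ast}\in(r_{3},r_{1})$; the plan is to derive a contradiction from the minimality of $r_{2}^{\ast}$ together with the hypothesis.

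At an interior minimizer, for $s<r_{2}^{\ast}$ close to $r_{2}^{\ast}$ one has $a(s)-a(r_{2}^{\ast})\geq L(s)-L(r_{2}^{\ast})$. Dividing by the negative quantity $s-r_{2}^{\ast}$ reverses the inequality, so
\[
\liminf_{s\to r_{2}^{\ast}-0}\frac{a(s)-a(r_{2}^{\ast})}{s-r_{2}^{\ast}} \;\leq\; L' \;=\; \frac{a(r_{1})-a(r_{3})}{r_{1}-r_{3}}.
\]
Applying the hypothesis with $r_{2}=r_{2}^{\ast}$ and with $r_{1}$ as in the set-up gives $\frac{a(r_{1})-a(r_{2}^{\ast})}{r_{1}-r_{2}^{\ast}}\leq \frac{a(r_{1})-a(r_{3})}{r_{1}-r_{3}}$, which is equivalent after clearing denominators to $a(r_{2}^{\ast})\geq L(r_{2}^{\ast})$, i.e.\ $h(r_{2}^{\ast})\geq 0$. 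Thus $h\geq 0$ on $[r_{3},r_{1}]$, proving the chord inequality and hence concavity.

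The main subtle point is ensuring that the infimum of $h$ is attained so that the hypothesis may be invoked at a genuine minimizer; this is precisely where lower semicontinuity enters. A secondary care-point is that the hypothesis only controls one-sided (left) limits, so I must arrange the chord-slope comparison so that the relevant slope comes in through the liminf from below, which is exactly what the minimality of $r_{2}^{\ast}$ provides.
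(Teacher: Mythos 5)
Your proof is correct. Note that the paper itself offers no proof of this lemma --- it is invoked as a ``well-known property of concave functions'' --- so there is no argument of the author's to compare against; your self-contained proof fills that gap. The forward direction is the standard monotonicity of chord slopes and needs no semicontinuity, as you say. The converse is the substantive part, and your argument is sound: lower semicontinuity of $h=a-L$ on the compact interval $[r_{3},r_{1}]$ guarantees the infimum is attained, the endpoint case is trivial since $h(r_{3})=h(r_{1})=0$, and at an interior minimizer $r_{2}^{\ast}$ the global minimality forces $\liminf_{s\to r_{2}^{\ast}-0}\frac{a(s)-a(r_{2}^{\ast})}{s-r_{2}^{\ast}}\leq L'$, which combined with the hypothesis gives $\frac{a(r_{1})-a(r_{2}^{\ast})}{r_{1}-r_{2}^{\ast}}\leq L'$ and hence $h(r_{2}^{\ast})\geq 0$ after clearing the positive denominator $r_{1}-r_{2}^{\ast}$. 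You also correctly identify the two delicate points: that attainment of the minimum is exactly where lower semicontinuity enters, and that the one-sided (left) nature of the hypothesis is matched by approaching the minimizer from below.
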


\section{Appendix: Proof of Lemma \ref{lem:GZ_sharp}}

In this section, we prove Lemma \ref{lem:GZ_sharp}.

\subsection{Preparations}
It follows from Lemma \ref{l:FN1}
that there exist smooth plurisubharmonic functions $\psi_{m}$ and $\varphi_{m}$ on $M$ decreasing convergent to $\psi$ and $\varphi$ respectively.

The following remark shows that it suffices to consider Lemma \ref{lem:GZ_sharp} for the case that
$M$ is a relatively compact open Stein submanifold of a Stein manifold,
and $F$ is a holomorphic $(n,0)$ form on $\{\psi<-t_{0}\}$ such that $\int_{\{\psi<-t_{0}\}}|F|^{2}<+\infty$,
which implies that
$\sup_{m}\sup_{M}\psi_{m}<-T$ and $\sup_{m}\sup_{M}\varphi_{m}<+\infty$ on $M$.

In the following remark, we recall some standard steps (see e.g. \cite{siu96,guan-zhou13p,guan-zhou13ap}) to illustrate it.

\begin{Remark}
\label{rem:unify}
It is well-known that there exist open Stein submanifolds $D_{1}\subset\subset\cdots\subset\subset D_{j}\subset\subset D_{j+1}\subset\subset\cdots$
such that $\cup_{j=1}^{+\infty}D_{j}=M$.

If inequality \eqref{equ:3.4} holds on any $D_{j}$ and inequality \eqref{equ:20171122a} holds on $M$,
then for any $B>0$, we obtain a sequence of holomorphic (n,0) forms $\tilde{F}_{j}$ on $D_{j}$ such that
\begin{equation}
\begin{split}
&\int_{D_{j}}|\tilde{F}_{j}-(1-b(\psi))F|^{2}e^{-\varphi+v(\psi)}c(-v(\psi))d\lambda_{n}
\\\leq&\int_{T}^{t_{0}+B}c(t)e^{-t}dt\int_{D_{j}}\frac{1}{B}\mathbb{I}_{\{-t_{0}-B<\psi<-t_{0}\}}|F|^{2}e^{-\varphi}d\lambda_{n}\leq C\int_{T}^{t_{0}+B}c(t)e^{-t}dt
\end{split}
\end{equation}
is bounded with respect to $j$.
Note that for any given $j$, $e^{-\varphi+v(\psi)}c(-v(\psi))$ has a positive lower bound,
then it follows that for any any given $j$, $\int_{ D_{j}}|\tilde{F}_{j'}-(1-b(\psi))F|^{2}$ is bounded with respect to $j'\geq j$.
Combining with
\begin{equation}
\label{equ:20171123a}
\int_{ D_{j}}|(1-b(\psi))F|^{2}\leq
\int_{D_{j}\cap\{\psi<-t_{0}\}}|F|^{2}<+\infty
\end{equation}
and inequality \eqref{equ:3.4},
one can obtain that $\int_{ D_{j}}|\tilde{F}_{j'}|^{2}$ is bounded with respect to $j'\geq j$.

By diagonal method, there exists a subsequence $F_{j''}$ uniformly convergent on any $\bar{M}_{j}$ to a holomorphic $(n,0)$ form on $M$ denoted by $\tilde{F}$.
Then it follows from inequality \eqref{equ:20171123a} and the dominated convergence theorem that
$$\int_{ D_{j}}|\tilde{F}-(1-b(\psi))F|^{2}e^{-\max\{\varphi-v(\psi),-N\}}c(-v(\psi))\leq C\int_{T}^{t_{0}+B}c(t)e^{-t}dt$$
for any $N>0$,
which implies
$$\int_{ D_{j}}|\tilde{F}-(1-b(\psi))F|^{2}e^{-(\varphi-v(\psi))}c(-v(\psi))\leq C\int_{T}^{t_{0}+B}c(t)e^{-t}dt,$$
then one can obtain Lemma \ref{lem:GZ_sharp} when $j$ goes to $+\infty$.
\end{Remark}

For the sake of completeness, we recall some lemmas on $L^{2}$ estimates for some $\bar\partial$ equations, and $\bar\partial^*$ means the Hilbert adjoint operator of
$\bar\partial$.

In the following part of this subsection, we recall some lemmas on $L^{2}$ estimates for
some $\bar\partial$ equations. Denote by
$\bar\partial^*$ or $D''^{*}$ means the Hilbert adjoint operator of
$\bar\partial$.

\begin{Lemma}\label{l:vector}(see \cite{ohsawa3} or \cite{ohsawa-takegoshi})
Let $(X,\omega)$ be a K\"{a}hler manifold of dimension n with a
K\"{a}hler metric $\omega$. Let $(E,h)$ be a hermitian holomorphic
vector bundle. Let $\eta,g>0$ be smooth functions on $X$. Then for
every form $\alpha\in \mathcal{D}(X,\Lambda^{n,q}T_{X}^{*}\otimes
E)$, which is the space of smooth differential forms with values in
$E$ with compact support, we have

\begin{equation}
\label{}
\begin{split}
&\|(\eta+g^{-1})^{\frac{1}{2}}D''^{*}\alpha\|^{2}
+\|\eta^{\frac{1}{2}}D''\alpha\|^{2}
\\&\geq\ll[\eta\sqrt{-1}\Theta_{E}-\sqrt{-1}\partial\bar\partial\eta-
\sqrt{-1}g\partial\eta\wedge\bar\partial\eta,\Lambda_{\omega}]\alpha,\alpha\gg.
\end{split}
\end{equation}
\end{Lemma}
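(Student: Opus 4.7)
The plan is to derive the estimate from the classical Bochner--Kodaira--Nakano (BKN) identity by introducing $\eta$ as a weight on the hermitian metric of $E$, and then using Cauchy--Schwarz with the free parameter $g$ to absorb the resulting cross term. The starting point is the Nakano inequality for compactly supported $\alpha\in\mathcal{D}(X,\Lambda^{n,q}T_X^{*}\otimes E)$,
\[
\|D''\alpha\|^{2}+\|D''^{*}\alpha\|^{2}\geq\ll[i\Theta_{E},\Lambda_{\omega}]\alpha,\alpha\gg,
\]
which holds because on forms of maximal holomorphic degree the $D'$-side of the Bochner formula vanishes identically.

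The first step is to apply BKN after replacing $h$ on $E$ by the twisted metric $\tilde h:=h\cdot\eta$ (equivalently, rescaling $\alpha\mapsto\eta^{1/2}\alpha$ and working with the original metric). The Chern curvature then changes by $i\Theta_{\tilde h}=i\Theta_{h}-i\partial\bar\partial\log\eta$, and the formal adjoint is modified by a zeroth-order term given by interior product against $\partial\log\eta$. Re-expanding norms in terms of the original metric yields, schematically,
\[
\|\eta^{1/2}D''\alpha\|^{2}+\|\eta^{1/2}D''^{*}\alpha\|^{2}+\text{cross terms}\geq\ll[\eta\,i\Theta_{E}-\eta\,i\partial\bar\partial\log\eta,\Lambda_{\omega}]\alpha,\alpha\gg,
\]
where the cross terms arise from commuting $\eta$ past $D''^{*}$ and take the form $2\operatorname{Re}\ll D''^{*}\alpha,(\bar\partial\eta)\,\lrcorner\,\alpha\gg$.

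The second step is the Cauchy--Schwarz step, which is where the auxiliary function $g>0$ enters. For any $g>0$,
\[
\bigl|2\operatorname{Re}\ll D''^{*}\alpha,(\bar\partial\eta)\,\lrcorner\,\alpha\gg\bigr|\leq g^{-1}\|D''^{*}\alpha\|^{2}+g\,\|(\bar\partial\eta)\,\lrcorner\,\alpha\|^{2}.
\]
The first summand merges with $\|\eta^{1/2}D''^{*}\alpha\|^{2}$ into the desired norm $\|(\eta+g^{-1})^{1/2}D''^{*}\alpha\|^{2}$. The second summand is converted into a curvature-type pairing by the pointwise identity
\[
\|(\bar\partial\eta)\,\lrcorner\,\alpha\|^{2}=\ll[i\,\partial\eta\wedge\bar\partial\eta,\Lambda_{\omega}]\alpha,\alpha\gg,
\]
valid on $(n,q)$-forms and coming from the standard Kähler identity $[\Lambda_{\omega},i\partial f\wedge\cdot\,]=(\bar\partial f)\,\lrcorner\,\cdot$. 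Finally the log-curvature expands as $\eta\,i\partial\bar\partial\log\eta=i\partial\bar\partial\eta-i\eta^{-1}\partial\eta\wedge\bar\partial\eta$; the residual $\eta^{-1}\partial\eta\wedge\bar\partial\eta$ contribution is absorbed into the Cauchy--Schwarz budget, leaving exactly the stated right-hand side $\ll[\eta\,i\Theta_{E}-i\partial\bar\partial\eta-ig\,\partial\eta\wedge\bar\partial\eta,\Lambda_{\omega}]\alpha,\alpha\gg$.

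The main obstacle is the sign/bookkeeping step: one has to track carefully how twisting by $\eta$ affects both sides of BKN, identify the interior-product norm $\|(\bar\partial\eta)\,\lrcorner\,\alpha\|^{2}$ with the $(1,1)$-form curvature pairing (which is only valid on forms of maximal holomorphic degree, so this is where the hypothesis $\alpha\in\Lambda^{n,q}T_{X}^{*}\otimes E$ is essential), and correctly combine the $\log$-derivative expansion with the Cauchy--Schwarz bound. Once those pointwise identifications and sign conventions are in place, the argument is an entirely mechanical combination of BKN with a single elementary Cauchy--Schwarz.
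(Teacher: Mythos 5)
The paper does not actually prove this lemma; it is quoted from \cite{ohsawa3} and \cite{ohsawa-takegoshi}, so there is no in-text proof to compare against. Your derivation is the standard proof of the twisted Bochner--Kodaira inequality and is essentially correct: apply the Bochner--Kodaira--Nakano identity with the metric twisted by $\eta$, expand the twisted adjoint to produce the cross term $2\Re\ll D''^{*}\alpha,\alpha\llcorner(\bar\partial\eta)^{\sharp}\gg$, apply a pointwise Cauchy--Schwarz with the weight $g$, and convert $|\alpha\llcorner(\bar\partial\eta)^{\sharp}|^{2}$ into $\langle[\sqrt{-1}\partial\eta\wedge\bar\partial\eta,\Lambda_{\omega}]\alpha,\alpha\rangle$ --- this last identification is exactly Lemma \ref{l:positve} of the paper, and it is indeed the step where the bidegree $(n,q)$ is essential. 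Two small points of bookkeeping deserve correction, though neither invalidates the argument: (i) in the untwisted Nakano inequality the $D'$-contribution does not ``vanish identically''; $D'\alpha=0$ for bidegree reasons, but $\|D'^{*}\alpha\|^{2}$ is merely nonnegative and is discarded, which is what gives the inequality rather than an identity; (ii) the term $\sqrt{-1}\eta^{-1}\partial\eta\wedge\bar\partial\eta$ arising from $\eta\,\partial\bar\partial\log\eta=\partial\bar\partial\eta-\eta^{-1}\partial\eta\wedge\bar\partial\eta$ is not ``absorbed into the Cauchy--Schwarz budget''; it cancels exactly against the square $\eta^{-1}|\alpha\llcorner(\bar\partial\eta)^{\sharp}|^{2}$ that appears when you expand the norm of the twisted adjoint, and only after that exact cancellation does the Cauchy--Schwarz step with parameter $g$ produce the stated term $-\sqrt{-1}g\,\partial\eta\wedge\bar\partial\eta$ and merge $g^{-1}\|D''^{*}\alpha\|^{2}$ with $\|\eta^{1/2}D''^{*}\alpha\|^{2}$ into $\|(\eta+g^{-1})^{1/2}D''^{*}\alpha\|^{2}$. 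With those two clarifications the proof is complete and coincides with the argument in the cited sources.
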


\begin{Lemma}
\label{l:positve}(see \cite{guan-zhou13ap})Let $X$ and $E$ be as in the above lemma and
$\theta$ be a continuous $(1,0)$ form on $X$. Then we have
$$[\sqrt{-1}\theta\wedge\bar\theta,\Lambda_{\omega}]\alpha=\bar\theta\wedge(\alpha\llcorner(\bar\theta)^\sharp\big),$$
for any $(n,1)$ form $\alpha$ with value in $E$. Moveover, for any
positive $(1,1)$ form $\beta$, we have $[\beta,\Lambda_{\omega}]$ is
semipositive.
\end{Lemma}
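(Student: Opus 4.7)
The plan is to reduce the identity to a pointwise calculation in an orthonormal frame and then extract the second assertion by expressing any positive $(1,1)$ form as a sum of the basic ones. Fix a point $x\in X$, and choose a unitary coframe $\{\omega_{1},\dots,\omega_{n}\}$ of $(1,0)$ forms at $x$ such that $\omega=\sqrt{-1}\sum_{j=1}^{n}\omega_{j}\wedge\bar{\omega}_{j}$ at $x$. Since both sides of the asserted identity are $\mathbb{C}$-bilinear in $(\theta,\bar\theta)$, after expanding $\theta=\sum_{j}\theta_{j}\omega_{j}$ the claim reduces to verifying the two bilinear identities $[\sqrt{-1}\omega_{j}\wedge\bar{\omega}_{k},\Lambda_{\omega}]\alpha=\bar{\omega}_{k}\wedge\bigl(\alpha\llcorner\omega_{j}^{\sharp}\cdot\text{(factor)}\bigr)$ for all $j,k$, where $\omega_{j}^{\sharp}$ denotes the metric dual of $\omega_{j}$.

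For the main computation, I would write an arbitrary $E$-valued $(n,1)$ form as $\alpha=\sum_{k}\alpha_{k}\,\Omega\otimes\bar{\omega}_{k}$ with $\Omega=\omega_{1}\wedge\cdots\wedge\omega_{n}$ and $\alpha_{k}$ local sections of $E$. Because $\alpha$ is already saturated in its holomorphic degree, $\sqrt{-1}\theta\wedge\bar\theta\wedge\alpha=0$ for dimensional reasons, so the commutator collapses to
\begin{equation*}
[\sqrt{-1}\theta\wedge\bar\theta,\Lambda_{\omega}]\alpha
=\sqrt{-1}\theta\wedge\bar\theta\wedge\Lambda_{\omega}\alpha.
\end{equation*}
A direct application of the standard formula $\Lambda_{\omega}(\Omega\otimes\bar{\omega}_{k})=(-1)^{?}\sqrt{-1}\,\iota_{\partial/\partial\bar{z}_{k}}\Omega$ (up to a sign handled uniformly by convention) then expresses $\Lambda_{\omega}\alpha$ in terms of the interior products of $\alpha$ with the frame. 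Expanding $\sqrt{-1}\theta\wedge\bar\theta\wedge\Lambda_{\omega}\alpha$ in the chosen basis and collecting terms yields exactly $\bar\theta\wedge(\alpha\llcorner(\bar\theta)^{\sharp})$, since the raised dual $(\bar\theta)^{\sharp}=\sum\bar\theta_{j}\,\partial/\partial\bar{z}_{j}$ is precisely the vector field whose contraction reproduces the cross terms $\bar\theta_{j}\bar\theta_{k}$ appearing on the left side.

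For the second assertion, note that any positive $(1,1)$ form $\beta$ at a point can, by simultaneous diagonalization with $\omega$, be written as $\beta=\sum_{j=1}^{n}\lambda_{j}\sqrt{-1}\,\eta_{j}\wedge\bar{\eta}_{j}$ with $\lambda_{j}\ge0$ and $\{\eta_{j}\}$ a unitary coframe. Since the commutator bracket is linear in $\beta$, we obtain
\begin{equation*}
\langle[\beta,\Lambda_{\omega}]\alpha,\alpha\rangle
=\sum_{j}\lambda_{j}\bigl|\alpha\llcorner(\bar\eta_{j})^{\sharp}\bigr|^{2}\ge 0
\end{equation*}
by the first part combined with the obvious identity $\langle\bar\eta_{j}\wedge\gamma,\alpha\rangle=\langle\gamma,\alpha\llcorner(\bar\eta_{j})^{\sharp}\rangle$ for an $(n,0)$ form $\gamma$. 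This gives the semipositivity.

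The main obstacle I anticipate is purely bookkeeping: getting the signs, the factor $\sqrt{-1}^{n^{2}}$ normalizations, and the $(-1)^{k-1}$ factors arising from $\iota_{\partial/\partial\bar{z}_{k}}\Omega$ consistent between the two sides of the identity. Once one fixes a convention for $\Lambda_{\omega}$ and for the contraction $\llcorner$, the calculation is a straightforward termwise match; however, different sources use different sign conventions, so the proof has to be carried out self-consistently with the conventions implicit in Lemma \ref{l:vector}.
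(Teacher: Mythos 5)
Your argument is correct and is essentially the proof given in the cited reference \cite{guan-zhou13ap}; the paper itself states this lemma without proof, simply quoting it from there. The key points you use --- that $\sqrt{-1}\theta\wedge\bar\theta\wedge\alpha$ vanishes for bidegree reasons so the commutator collapses to $\sqrt{-1}\theta\wedge\bar\theta\wedge\Lambda_{\omega}\alpha$, the pointwise unitary-frame computation, and the simultaneous diagonalization of $\beta$ with $\omega$ plus the adjointness of $\bar\eta_{j}\wedge\cdot$ and $\cdot\llcorner(\bar\eta_{j})^{\sharp}$ for semipositivity --- are exactly the standard route, and the only remaining care is the sign and convention bookkeeping you already flag.
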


\begin{Lemma}\label{l:vector7}(see \cite{demailly99,demailly2010})
Let $X$ be a complete K\"{a}hler manifold equipped with a (non
necessarily complete) K\"{a}hler metric $\omega$, and let $E$ be a
Hermitian vector bundle over $X$. Assume that there are smooth and
bounded functions $\eta$, $g>0$ on $X$ such that the (Hermitian)
curvature operator

$$\textbf{B}:=[\eta\sqrt{-1}\Theta_{E}-\sqrt{-1}\partial\bar\partial\eta-
\sqrt{-1}g\partial\eta\wedge\bar\partial\eta,\Lambda_{\omega}]$$
is positive definite everywhere on $\Lambda^{n,q}T^{*}_{X}\otimes E$, for some $q\geq 1$.
Then for every form $\lambda\in L^{2}(X,\Lambda^{n,q}T^{*}_{X}\otimes E)$ such that $D''\lambda=0$ and
$\int_{X}\langle\textbf{B}^{-1}\lambda,\lambda\rangle dV_{\omega}<\infty$,
there exists $u\in L^{2}(X,\Lambda^{n,q-1}T^{*}_{X}\otimes E)$ such that $D''u=\lambda$ and
$$\int_{X}(\eta+g^{-1})^{-1}|u|^{2}dV_{\omega}\leq\int_{X}\langle\textbf{B}^{-1}\lambda,\lambda\rangle dV_{\omega}.$$
\end{Lemma}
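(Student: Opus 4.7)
The plan is a H\"ormander--Andreotti--Vesentini duality argument: use Lemma \ref{l:vector} as an a priori twisted inequality controlling the linear functional $\alpha\mapsto\langle\alpha,\lambda\rangle$ on $D''$-closed $\alpha$, then apply Hahn--Banach and Riesz representation to produce the required solution $u$.

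First, the a priori inequality of Lemma \ref{l:vector} must be upgraded from $\mathcal{D}(X,\Lambda^{n,q}T_{X}^{*}\otimes E)$ to all $\alpha\in\mathrm{dom}(D'')\cap\mathrm{dom}(D''^{*})$. This is where the completeness of $(X,\omega)$ enters: the standard Andreotti--Vesentini cutoff argument (producing $\chi_{k}$ with $0\leq\chi_{k}\leq 1$, $\chi_{k}\uparrow 1$, $|d\chi_{k}|_{\omega}$ uniformly bounded, and $\mathrm{supp}(d\chi_{k})$ escaping to infinity) shows that $C_{c}^{\infty}$ is graph-norm dense in $\mathrm{dom}(D'')\cap\mathrm{dom}(D''^{*})$, so Lemma \ref{l:vector} passes to the limit. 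Restricted to $\alpha\in\mathrm{dom}(D''^{*})\cap\ker D''$, the extended inequality becomes $\|(\eta+g^{-1})^{1/2}D''^{*}\alpha\|^{2}\geq\|\mathbf{B}^{1/2}\alpha\|^{2}$, and combining with the identity $|\langle\alpha,\lambda\rangle|=|\langle\mathbf{B}^{1/2}\alpha,\mathbf{B}^{-1/2}\lambda\rangle|$ and Cauchy--Schwarz yields
\[
|\langle\alpha,\lambda\rangle|\leq\|(\eta+g^{-1})^{1/2}D''^{*}\alpha\|\cdot\Bigl(\int_{X}\langle\mathbf{B}^{-1}\lambda,\lambda\rangle dV_{\omega}\Bigr)^{1/2}.
\]

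Next, define a linear functional $L$ on the subspace $D''^{*}\bigl(\mathrm{dom}(D''^{*})\cap\ker D''\bigr)$ of the weighted Hilbert space $H_{*}:=L^{2}_{(\eta+g^{-1})dV_{\omega}}(X,\Lambda^{n,q-1}T_{X}^{*}\otimes E)$ by $L(D''^{*}\alpha):=\langle\alpha,\lambda\rangle$. The above bound makes $L$ well-defined (since $D''^{*}\alpha=0$ forces $\langle\alpha,\lambda\rangle=0$) and bounded in the weighted norm $\|\cdot\|_{H_{*}}$ by $\bigl(\int_{X}\langle\mathbf{B}^{-1}\lambda,\lambda\rangle dV_{\omega}\bigr)^{1/2}$. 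Hahn--Banach extends $L$ to all of $H_{*}$ with the same norm; Riesz representation then produces $w\in H_{*}$ with $L(\beta)=\langle\beta,w\rangle_{H_{*}}$ and $\|w\|_{H_{*}}^{2}\leq\int_{X}\langle\mathbf{B}^{-1}\lambda,\lambda\rangle dV_{\omega}$. Setting $u:=(\eta+g^{-1})w$ converts the weighted pairing into the standard $L^{2}(dV_{\omega})$ pairing, so $\langle D''^{*}\alpha,u\rangle=\langle\alpha,\lambda\rangle$ for all $\alpha\in\mathrm{dom}(D''^{*})\cap\ker D''$, and the norm bound rearranges to
\[
\int_{X}(\eta+g^{-1})^{-1}|u|^{2}dV_{\omega}\leq\int_{X}\langle\mathbf{B}^{-1}\lambda,\lambda\rangle dV_{\omega}.
\]

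The most delicate step is promoting the identity $\langle D''^{*}\alpha,u\rangle=\langle\alpha,\lambda\rangle$, known so far only for $\alpha\in\ker D''$, to the distributional identity $D''u=\lambda$ against all smooth compactly supported test $(n,q)$-forms $\gamma$. The standard device is the $L^{2}$-orthogonal decomposition $\gamma=\gamma_{1}+\gamma_{2}$ with $\gamma_{1}\in\ker D''$ and $\gamma_{2}\in(\ker D'')^{\perp}=\overline{\mathrm{Range}(D''^{*})}$: the identity is in hand on $\gamma_{1}$ by construction, while on $\gamma_{2}$ one approximates by $D''^{*}\delta_{j}$ with $\delta_{j}\in C_{c}^{\infty}$ (using the completeness-based density once more) and passes to the limit via $\langle\lambda,D''^{*}\delta_{j}\rangle=\langle D''\lambda,\delta_{j}\rangle=0$ (from the hypothesis $D''\lambda=0$) together with $\langle u,(D''^{*})^{2}\delta_{j}\rangle=0$ (from $(D''^{*})^{2}=0$). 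The main obstacle is precisely this coupling between the $L^{2}$-decomposition and the integration-by-parts identities -- ensuring all limits exist and that every pairing remains a legal $L^{2}$ pairing -- and it is handled, once again, by completeness via the Andreotti--Vesentini cutoff.
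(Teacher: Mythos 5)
The paper does not prove Lemma \ref{l:vector7}; it cites it to \cite{demailly99,demailly2010}, and your proposal reproduces essentially the standard H\"ormander--Andreotti--Vesentini duality argument from those references, which is correct. A small simplification is available in your final step: since $\gamma_{2}\in(\ker D'')^{\perp}=\overline{\operatorname{Im}D''^{*}}\subseteq\ker D''^{*}$ (using $(D''^{*})^{2}=0$ and that $\ker D''^{*}$ is closed), one has $D''^{*}\gamma_{2}=0$ and $\langle\lambda,\gamma_{2}\rangle=0$ directly, so the approximation of $\gamma_{2}$ by $D''^{*}\delta_{j}$ and the passage to the limit there are not needed.
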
\emph{}

The following Lemma belongs to Fornaess
and Narasimhan on approximation property of plurisubharmonic
functions of Stein manifolds.

\begin{Lemma}
\label{l:FN1}\cite{FN1980} Let $X$ be a Stein manifold and $\varphi \in PSH(X)$. Then there exists a sequence
$\{\varphi_{n}\}_{n=1,2,\cdots}$ of smooth strongly plurisubharmonic functions such that
$\varphi_{n} \downarrow \varphi$.
\end{Lemma}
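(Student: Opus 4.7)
The plan is to reduce the problem to the Euclidean setting and then regularize by convolution, using a holomorphic retraction from a tubular neighborhood together with Demailly's regularized maximum for globalization. I would proceed in four steps.

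First, I would invoke the Remmert--Bishop--Narasimhan embedding theorem to realize $X$ as a closed complex submanifold of some $\mathbb{C}^{N}$, and then the Docquier--Grauert tubular-neighborhood theorem to obtain an open Stein neighborhood $U\subset\mathbb{C}^{N}$ of $X$ together with a holomorphic retraction $\pi\colon U\to X$. The pullback $\tilde\varphi:=\varphi\circ\pi$ is plurisubharmonic on $U$ and extends $\varphi$; any smooth strongly plurisubharmonic decreasing approximation of $\tilde\varphi$ on $U$ restricts on $X$ to the required approximation of $\varphi$. Thus it suffices to prove the statement for plurisubharmonic functions on open subsets of $\mathbb{C}^{N}$.

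Second, I would exhaust $U$ by relatively compact open subsets $U_{j}\subset\subset U_{j+1}$ with $\bigcup_{j}U_{j}=U$. On a neighborhood of each $\overline{U_{j}}$ and for every sufficiently small $\epsilon>0$ I convolve $\tilde\varphi$ with a standard nonnegative radial mollifier $\rho_{\epsilon}$ supported in a ball of radius $\epsilon$ in $\mathbb{C}^{N}$; the resulting $\tilde\varphi\ast\rho_{\epsilon}$ is smooth and plurisubharmonic, and decreases pointwise to $\tilde\varphi$ as $\epsilon\downarrow 0$ by the mean-value characterization of plurisubharmonicity combined with upper semicontinuity. Adding an $\epsilon\,|z|^{2}$ term makes these approximants strongly plurisubharmonic while preserving the decreasing behavior as $\epsilon$ shrinks.

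Third, I would assemble global smooth strongly plurisubharmonic functions $\tilde\varphi_{n}$ on $U$ by patching the local mollifications via Demailly's regularized maximum $M_{\eta}$, whose basic properties $(a1)$--$(a7)$ are already recalled in this paper in the proof of Proposition \ref{prop:linearB}. Fixing a smooth strongly plurisubharmonic exhaustion $\rho$ of $U$ and constants $c_{n}\to+\infty$, I form the regularized max of $\tilde\varphi_{\epsilon_{n}}+2^{-n}$ (defined near $\overline{U_{j(n)}}$) with $\rho-c_{n}$ (defined on all of $U$), with gluing parameter $\eta_{n}$ chosen smaller than the relevant gaps, so that on $U_{n}$ the new function equals $\tilde\varphi_{\epsilon_{n}}+2^{-n}$ while outside a larger neighborhood it coincides with $\rho-c_{n}$. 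This produces a globally defined smooth strongly plurisubharmonic function on $U$ lying within $O(2^{-n})$ of $\tilde\varphi$ on $U_{n}$.

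The main obstacle is ensuring that the resulting sequence $\tilde\varphi_{n}$ is globally \emph{decreasing} in $n$, not merely pointwise convergent to $\tilde\varphi$; this forces a simultaneous tuning of the three parameters $\epsilon_{n}$, $c_{n}$, $\eta_{n}$, together with a diagonal selection of $j(n)$. A convenient device, following \cite{FN1980}, is first to construct a non-monotone approximating sequence $\psi_{n}$ with $\psi_{n}\to\tilde\varphi$ pointwise from above, then to replace it by the running infima $\inf_{k\ge n}\psi_{k}$ and to smooth these infima once more using the regularized-max machinery of the previous step, so that monotonicity is built in by construction. Pulling back by $\iota^{-1}$ to $X$ then yields the required sequence $\{\varphi_{n}\}$, with strong plurisubharmonicity at each step supplied by the $\epsilon\,|z|^{2}$ summand and by the exhaustion $\rho$.
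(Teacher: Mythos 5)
The paper does not prove this lemma at all; it is stated as a citation to Forn\ae ss--Narasimhan \cite{FN1980} and used as a black box, so there is no internal proof against which to compare your attempt. Your Steps 1--3 do follow the standard Demailly-style route (Remmert embedding, Docquier--Grauert retraction, local mollification, gluing with the regularized maximum against a strongly plurisubharmonic exhaustion), and in outline those steps are sound, modulo the bookkeeping needed to keep the gluing region away from the shrinking domain of definition of each mollification.

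Step 4, however, contains a genuine gap that invalidates the proposed fix for monotonicity. The running infimum $\inf_{k\ge n}\psi_{k}$ of plurisubharmonic functions is in general \emph{not} plurisubharmonic: the minimum of two subharmonic functions need not be subharmonic, the standard example being $\min(\operatorname{Re}z,\,-\operatorname{Re}z)=-|\operatorname{Re}z|$, which has a strictly negative distributional Laplacian along $\{\operatorname{Re}z=0\}$. So there is no plurisubharmonic object left to which the regularized-max machinery can be applied; "smoothing the infima" is circular at best and meaningless at worst. There is also a direction error: $\inf_{k\ge n}\psi_{k}$ is \emph{increasing} in $n$ (the index set over which the infimum is taken shrinks) and converges to $\tilde\varphi$ from \emph{below}, whereas the lemma requires approximation from above in a decreasing way. (You may have had the running supremum in mind, which is at least plurisubharmonic and does decrease, but then it is far from smooth and you are back to where you started.)

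The correct way to obtain a decreasing sequence is not to post-process a non-monotone sequence by taking envelopes, but to build the decrease in from the start: choose $\epsilon_{n}\downarrow0$, $j(n)\uparrow\infty$, $c_{n}\uparrow\infty$ and $\eta_{n}\downarrow0$ inductively so that on $U_{n}$ the glued function equals $\tilde\varphi\ast\rho_{\epsilon_{n}}+2^{-n}$ exactly (using $(a4)$), which is then automatically decreasing in $n$ on $U_{n}$ because mollifications of a plurisubharmonic function decrease as the radius shrinks, while outside a slightly larger set $U_{j(n)}$ the glued function equals $\rho-c_{n}$ by $(a5)$; the delicate point is that the transition annuli and the constants must be nested and tuned step by step so that $\tilde\varphi_{n+1}\le\tilde\varphi_{n}$ also holds across the two gluing bands, and this does not follow from the scheme as you have stated it.
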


\subsection{Proof of Lemma \ref{lem:GZ_sharp}}

For the sake of completeness, let's recall some steps in the proof in
\cite{guan_sharp} (see also \cite{guan-zhou13p,guan-zhou13ap,GZopen-effect}) with some slight
modifications in order to prove Lemma \ref{lem:GZ_sharp}.

It follows from Remark \ref{rem:unify} that
it suffices to consider that $M$ is a Stein manifold, and $F$ is holomorphic $(n,0)$ form on $U\cap \{\psi<-t_{0}\}$ and
\begin{equation}
\label{equ:20171122e}
\int_{\{\psi<-t_{0}\}}|F|^{2}<+\infty,
\end{equation}
and there exist smooth plurisubharmonic functions $\psi_{m}$ and $\varphi_{m}$ on $M$ decreasing convergent to $\psi$ and $\varphi$ respectively,
satisfying $\sup_{m}\sup_{M}\psi_{m}<-T$ and $\sup_{m}\sup_{M}\varphi_{m}<+\infty$.

\

\emph{Step 1: recall some Notations}

\

Let $\varepsilon\in(0,\frac{1}{8}B)$.
Let $\{v_{\varepsilon}\}_{\varepsilon\in(0,\frac{1}{8}B)}$ be a family of smooth increasing convex functions on $\mathbb{R}$,
which are continuous functions on $\mathbb{R}\cup\{-T\}$, such that:

 $1).$ $v_{\varepsilon}(t)=t$ for $t\geq-t_{0}-\varepsilon$, $v_{\varepsilon}(t)=constant$ for $t<-t_{0}-B+\varepsilon$;

 $2).$ $v''_{\varepsilon}(t)$ are pointwise convergent to $\frac{1}{B}\mathbb{I}_{(-t_{0}-B,-t_{0})}$, when $\varepsilon\to 0$,
 and $0\leq v''_{\varepsilon}(t)\leq \frac{2}{B}\mathbb{I}_{(-t_{0}-B+\varepsilon,-t_{0}-\varepsilon)}$ for any $t\in \mathbb{R}$;

 $3).$ $v'_{\varepsilon}(t)$ are pointwise convergent to $b(t)$ which is a continuous function on $\mathbb{R}$, when $\varepsilon\to 0$, and $0\leq v'_{\varepsilon}(t)\leq1$ for any $t\in \mathbb{R}$.

One can construct the family $\{v_{\varepsilon}\}_{\varepsilon\in(0,\frac{1}{8}B)}$ by the setting
\begin{equation}
\label{equ:20140101}
\begin{split}
v_{\varepsilon}(t):=&\int_{-\infty}^{t}(\int_{-\infty}^{t_{1}}(\frac{1}{B-4\varepsilon}
\mathbb{I}_{(-t_{0}-B+2\varepsilon,-t_{0}-2\varepsilon)}*\rho_{\frac{1}{4}\varepsilon})(s)ds)dt_{1}
\\&-\int_{-\infty}^{0}(\int_{-\infty}^{t_{1}}(\frac{1}{B-4\varepsilon}\mathbb{I}_{(-t_{0}-B+2\varepsilon,
-t_{0}-2\varepsilon)}*\rho_{\frac{1}{4}\varepsilon})(s)ds)dt_{1},
\end{split}
\end{equation}
where $\rho_{\frac{1}{4}\varepsilon}$ is the kernel of convolution satisfying $supp(\rho_{\frac{1}{4}\varepsilon})\subset (-\frac{1}{4}\varepsilon,\frac{1}{4}\varepsilon)$.
Then it follows that
$$v''_{\varepsilon}(t)=\frac{1}{B-4\varepsilon}\mathbb{I}_{(-t_{0}-B+2\varepsilon,-t_{0}-2\varepsilon)}*\rho_{\frac{1}{4}\varepsilon}(t),$$
and
$$v'_{\varepsilon}(t)=\int_{-\infty}^{t}(\frac{1}{B-4\varepsilon}\mathbb{I}_{(-t_{0}-B+2\varepsilon,-t_{0}-2\varepsilon)}
*\rho_{\frac{1}{4}\varepsilon})(s)ds.$$

It suffices to consider the case that
\begin{equation}
\label{equ:20171121a}
\int_{M}\frac{1}{B}\mathbb{I}_{\{-t_{0}-B<\psi<-t_{0}\}}|F|^{2}e^{-\psi-\varphi}<+\infty.
\end{equation}

Let $\eta=s(-v_{\varepsilon}(\psi_{m}))$ and $\phi=u(-v_{\varepsilon}(\psi_{m}))$,
where $s\in C^{\infty}((T,+\infty))$ satisfies $s\geq0$, and
$u\in C^{\infty}((T,+\infty))$, satisfies $\lim_{t\to+\infty}u(t)$ exists, such that $u''s-s''>0$, and $s'-u's=1$.
It follows from $\sup_{m}\sum_{M}\psi_{m}<-T$ that $\phi=u(-v_{\varepsilon}(\psi_{m}))$ are uniformly bounded
on $M$ with respect to $m$ and $\varepsilon$,
and $u(-v_{\varepsilon}(\psi))$ are uniformly bounded
on $M$ with respect to $\varepsilon$.
Let $\Phi=\phi+\varphi_{m'}$,
and let $\tilde{h}=e^{-\Phi}$.

\

\emph{Step 2: Solving $\bar\partial-$equation with smooth polar function and smooth weight}

\
Now let $\alpha\in \mathcal{D}(M,\Lambda^{n,1}T_{M}^{*})$ be a
smooth $(n,1)$ form with compact support on $M$. Using Lemma
\ref{l:vector} and Lemma \ref{l:positve}, the inequality $s\geq0$ and the fact that $\varphi_{m}$ is
plurisubharmonic on $M$, we get

\begin{equation}
\label{equ:10.1}
\begin{split}
&\|(\eta+g^{-1})^{\frac{1}{2}}D''^{*}\alpha\|^{2}_{M,\tilde{h}}
+\|\eta^{\frac{1}{2}}D''\alpha\|^{2}_{M,\tilde{h}}
\\&\geq\ll[\eta\sqrt{-1}\Theta_{\tilde{h}}-\sqrt{-1}\partial\bar\partial\eta-
\sqrt{-1}g\partial\eta\wedge\bar\partial\eta,\Lambda_{\omega}]\alpha,\alpha\gg_{M,\tilde{h}}
\\&\geq\ll[\eta\sqrt{-1}\partial\bar\partial\phi-\sqrt{-1}\partial\bar\partial\eta-
\sqrt{-1}g\partial\eta\wedge\bar\partial\eta,\Lambda_{\omega}]\alpha,\alpha\gg_{M,\tilde{h}}.
\end{split}
\end{equation}
where $g$ is a positive continuous function on $M$.
We need the following calculations to determine $g$.
\begin{equation}
\label{}
\begin{split}
&\partial\bar{\partial}\eta=-s'(-v_{\varepsilon}(\psi_{m}))\partial\bar{\partial}(v_{\varepsilon}(\psi_{m}))
+s''(-v_{\varepsilon}(\psi_{m}))\partial v_{\varepsilon}(\psi_{m})\wedge
\bar{\partial}v_{\varepsilon}(\psi_{m}),
\end{split}
\end{equation}

and
\begin{equation}
\label{}
\begin{split}
&\partial\bar{\partial}\phi=-u'(-v_{\varepsilon}(\psi_{m}))\partial\bar{\partial}v_{\varepsilon}(\psi_{m})
+
u''(-v_{\varepsilon}(\psi_{m}))\partial v_{\varepsilon}(\psi_{m})\wedge\bar{\partial}v_{\varepsilon}(\psi_{m}).
\end{split}
\end{equation}
Then we have
\begin{equation}
\label{equ:vector1}
\begin{split}
&-\partial\bar{\partial}\eta+\eta\partial\bar{\partial}\phi-g(\partial\eta)\wedge\bar\partial\eta
\\=&(s'-su')\partial\bar{\partial}v_{\varepsilon}(\psi_{m})+((u''s-s'')-gs'^{2})\partial
(-v_{\varepsilon}(\psi_{m}))\bar{\partial}(-v_{\varepsilon}(\psi_{m}))
\\=&(s'-su')(v'_{\varepsilon}(\psi_{m})\partial\bar{\partial}\psi_{m}+v''_{\varepsilon}(\psi_{m})
\partial(\psi_{m})\wedge\bar{\partial}(\psi_{m}))
\\&+((u''s-s'')-gs'^{2})\partial
(-v_{\varepsilon}(\psi_{m}))\wedge\bar{\partial}(-v_{\varepsilon}(\psi_{m})).
\end{split}
\end{equation}
We omit composite item $-v_{\varepsilon}(\psi_{m})$ after $s'-su'$ and $(u''s-s'')-gs'^{2}$ in the above equalities.

As $v'_{t_0,\varepsilon}\geq 0$  and $s'-su'=1$, using Lemma
\ref{l:positve}, equality \eqref{equ:vector1} and inequality
\eqref{equ:10.1}, we obtain
\begin{equation}
\label{equ:semi.vector3}
\begin{split}
\langle\textbf{B}\alpha, \alpha\rangle_{\tilde{h}}=&\langle[\eta\sqrt{-1}\Theta_{\tilde{h}}-\sqrt{-1}\partial\bar\partial
\eta-\sqrt{-1}g\partial\eta\wedge\bar\partial\eta,\Lambda_{\omega}]
\alpha,\alpha\rangle_{\tilde{h}}
\\\geq&
\langle[(v''_{t_0,\varepsilon}\circ\psi_{m})
\sqrt{-1}\partial\psi_{m}\wedge\bar{\partial}\psi_{m},\Lambda_{\omega}]\alpha,\alpha\rangle_{\tilde{h}}
\\=&\langle (v''_{t_{0},\varepsilon}\circ\psi_{m}) \bar\partial\psi_{m}\wedge
(\alpha\llcorner(\bar\partial\psi_{m})^\sharp\big ),\alpha\rangle_{\tilde{h}}.
\end{split}
\end{equation}

Using the definition of contraction, Cauchy-Schwarz inequality and
the inequality \eqref{equ:semi.vector3}, we have
\begin{equation}
\label{equ:20181106}
\begin{split}
|\langle (v''_{t_{0},\varepsilon}\circ\psi_{m})\bar\partial\psi_{m}\wedge \gamma,\tilde{\alpha}\rangle_{\tilde{h}}|^{2}
=&|\langle (v''_{t_{0},\varepsilon}\circ\psi_{m}) \gamma,\tilde{\alpha}\llcorner(\bar\partial\psi_{m})^\sharp\big
\rangle_{\tilde{h}}|^{2}
\\\leq&\langle( v''_{t_{0},\varepsilon}\circ\psi_{m}) \gamma,\gamma\rangle_{\tilde{h}}
(v''_{t_{0},\varepsilon}\circ\psi_{m})|\tilde{\alpha}\llcorner(\bar\partial\psi_{m})^\sharp\big|_{\tilde{h}}^{2}
\\=&\langle (v''_{t_{0},\varepsilon}\circ\psi_{m}) \gamma,\gamma\rangle_{\tilde{h}}
\langle (v''_{t_{0},\varepsilon}\circ\psi_{m}) \bar\partial\psi_{m}\wedge
(\tilde{\alpha}\llcorner(\bar\partial\psi_{m})^\sharp\big ),\tilde{\alpha}\rangle_{\tilde{h}}
\\\leq&\langle (v''_{t_{0},\varepsilon}\circ\psi_{m})\gamma,\gamma\rangle_{\tilde{h}}
\langle\textbf{B}\tilde{\alpha},\tilde{\alpha}\rangle_{\tilde{h}},
\end{split}
\end{equation}
for any $(n,0)$ form $\gamma$ and $(n,1)$ form $\tilde{\alpha}$.

As $F$ is holomorphic on $\{\psi<-t_{0}\}\supset\supset Supp(v'_{\varepsilon}(\psi_{m}))$,
then $\lambda:=\bar{\partial}[(1-v'_{\varepsilon}(\psi_{m})){F}]$
is well-defined and smooth on $M$.

Taking $\gamma=F$, and
$\tilde{\alpha}=\textbf{B}^{-1}\bar\partial\Psi\wedge \tilde{F}$,
note that $\tilde{h}=e^{-\Phi}$,
using inequality \eqref{equ:20181106},
we have
$$\langle \textbf{B}^{-1}\lambda,\lambda\rangle_{\tilde{h}} \leq v''_{t_0,\varepsilon}(\psi_{m})| \tilde{F}|^{2}e^{-\Phi}.$$
Then it follows that
$$\int_{M}\langle \textbf{B}^{-1}\lambda,\lambda\rangle_{\tilde{h}}
 \leq \int_{M}v''_{t_0,\varepsilon}(\psi_{m})| \tilde{F}|^{2}e^{-\Phi}.$$
Using Lemma \ref{l:vector7},
we have locally $L^{1}$ function $u_{m,m',\varepsilon}$ on $M$ such that $\bar{\partial}u_{m,m',\varepsilon}=\lambda$,
and
\begin{equation}
 \label{equ:3.2}
 \begin{split}
 &\int_{M}|u_{m,m',\varepsilon}|^{2}(\eta+g^{-1})^{-1} e^{-\Phi}\leq\int_{M}\langle \textbf{B}^{-1}\lambda,\lambda\rangle_{\tilde{h}}
  \leq\int_{M}v''_{\varepsilon}(\psi_{m})| F|^2e^{-\Phi}.
  \end{split}
\end{equation}

Let $g=\frac{u''s-s''}{s'^{2}}(-v_{\varepsilon}(\psi_{m}))$.
It follows that $\eta+g^{-1}=(s+\frac{s'^{2}}{u''s-s''})(-v_{\varepsilon}(\psi_{m}))$.
Let $\mu:=(\eta+g^{-1})^{-1}$.

Assume that we can choose $\eta$ and $\phi$ such that $e^{v_{\varepsilon}\circ\psi_{m}}e^{\phi}c(-v_{\varepsilon}\circ\psi_{m})=(\eta+g^{-1})^{-1}$.
Then inequality \eqref{equ:3.2} becomes
\begin{equation}
 \label{equ:20171122b}
 \begin{split}
 &\int_{M}|u_{m,m',\varepsilon}|^{2}e^{v_{\varepsilon}(\psi_{m})-\varphi_{m'}}c(-v_{\varepsilon}\circ\psi_{m})
  \leq\int_{M}v''_{\varepsilon}(\psi_{m})| F|^2e^{-\phi-\varphi_{m'}}.
  \end{split}
\end{equation}

Let $F_{m,m',\varepsilon}:=-u_{m,m',\varepsilon}+(1-v'_{\varepsilon}(\psi_{m})){F}$.
Then inequality \eqref{equ:20171122b} becomes
\begin{equation}
 \label{equ:20171122c}
 \begin{split}
 &\int_{M}|F_{m,m',\varepsilon}-(1-v'_{\varepsilon}(\psi_{m})){F}|^{2}e^{v_{\varepsilon}(\psi_{m})-\varphi_{m'}}c(-v_{\varepsilon}\circ\psi_{m})
  \\&\leq\int_{M}(v''_{\varepsilon}(\psi_{m}))| F|^2e^{-\phi-\varphi_{m'}}.
  \end{split}
\end{equation}

\

\emph{Step 3: Singular polar function and smooth weight}

\

As $\sup_{m,\varepsilon}|\phi|=\sup_{m,\varepsilon}|u(-v_{\varepsilon}(\psi_{m}))|<+\infty$ and $\varphi_{m'}$ is continuous on $\bar{M}$,
then $\sup_{m,\varepsilon}e^{-\phi-\varphi_{m'}}<+\infty$.
Note that
$$v''_{\varepsilon}(\psi_{m})| F|^2e^{-\phi-\varphi_{m'}}\leq\frac{2}{B}\mathbb{I}_{\{\psi<-t_{0}\}}| F|^{2}\sup_{m,\varepsilon}e^{-\phi-\varphi_{m'}}$$
on $M$,
then it follows from inequality \eqref{equ:20171122e} and the dominated convergence theorem that
\begin{equation}
\label{equ:20171122f}
 \lim_{m\to+\infty}\int_{M}v''_{\varepsilon}(\psi_{m})| F|^2e^{-\phi-\varphi_{m'}}=
 \int_{M}v''_{\varepsilon}(\psi)| F|^2e^{-u(-v_{\varepsilon}(\psi))-\varphi_{m'}}
\end{equation}

Note that $\inf_{m}\inf_{M}e^{v_{\varepsilon}(\psi_{m})-\varphi_{m'}}c(-v_{\varepsilon}\circ\psi_{m})>0$,
then it follows from inequality \eqref{equ:20171122c} and \eqref{equ:20171122f}
that $\sup_{m}\int_{M}|F_{m,m',\varepsilon}-(1-v'_{\varepsilon}(\psi_{m})){F}|^{2}<+\infty$.
Note that
\begin{equation}
\label{equ:20171122g}
|(1-v'_{\varepsilon}(\psi_{m}))F|\leq |\mathbb{I}_{\{\psi<-t_{0}\}}F|,
\end{equation}
then it follows from inequality \eqref{equ:20171122e}
that $\sup_{m}\int_{M}|F_{m,m',\varepsilon}|^{2}<+\infty$,
which implies that there exists a subsequence of $\{F_{m,m',\varepsilon}\}_{m}$
(also denoted by $F_{m,m',\varepsilon}$) compactly convergent to a holomorphic $F_{m',\varepsilon}$ on $M$.

Note that $v_{\varepsilon}(\psi_{m})-\varphi_{m'}$ are uniformly bounded on $M$ with respect to $m$,
then it follows from
$|F_{m,m',\varepsilon}-(1-v'_{\varepsilon}(\psi_{m})){F}|^{2}\leq
 2(|F_{m,m',\varepsilon}|^{2}+ |(1-v'_{\varepsilon}(\psi_{m})){F}|^{2})
\leq  2(|F_{m,m',\varepsilon}|^{2}+  |\mathbb{I}_{\{\psi<-t_{0}\}}F^{2}|)$
and the dominated convergence theorem that
\begin{equation}
 \label{equ:20171122d}
 \begin{split}
 \lim_{m\to+\infty}&\int_{K}|F_{m,m',\varepsilon}-(1-v'_{\varepsilon}(\psi_{m})){F}|^{2}e^{v_{\varepsilon}(\psi_{m})-\varphi_{m'}}c(-v_{\varepsilon}\circ\psi_{m})
  \\=&\int_{K}|F_{m',\varepsilon}-(1-v'_{\varepsilon}(\psi)){F}|^{2}e^{v_{\varepsilon}(\psi)-\varphi_{m'}}c(-v_{\varepsilon}\circ\psi)
  \end{split}
\end{equation}
holds for any compact subset $K$ on $M$.
Combining with inequality \eqref{equ:20171122c} and \eqref{equ:20171122f},
one can obtain that
\begin{equation}
 \label{equ:20171122h}
 \begin{split}
&\int_{K}|F_{m',\varepsilon}-(1-v'_{\varepsilon}(\psi)){F}|^{2}e^{v_{\varepsilon}(\psi)-\varphi_{m'}}c(-v_{\varepsilon}\circ\psi)
\\&\leq
\int_{M}v''_{\varepsilon}(\psi)| F|^2e^{-u(-v_{\varepsilon}(\psi))-\varphi_{m'}},
\end{split}
\end{equation}
which implies
\begin{equation}
 \label{equ:20171122i}
 \begin{split}
&\int_{M}|F_{m',\varepsilon}-(1-v'_{\varepsilon}(\psi)){F}|^{2}e^{v_{\varepsilon}(\psi)-\varphi_{m'}}c(-v_{\varepsilon}\circ\psi)
\\&\leq
\int_{M}v''_{\varepsilon}(\psi)| F|^2e^{-u(-v_{\varepsilon}(\psi))-\varphi_{m'}},
\end{split}
\end{equation}

\

\emph{Step 4: Nonsmooth cut-off function}

\

Note that
$\sup_{\varepsilon}\sup_{M}e^{-u(-v_{\varepsilon}(\psi))-\varphi_{m'}}<+\infty,$
and
$$v''_{\varepsilon}(\psi)| F|^2e^{-u(-v_{\varepsilon}(\psi))-\varphi_{m'}}\leq
\frac{2}{B}\mathbb{I}_{\{-t_{0}-B<\psi<-t_{0}\}}| F|^2\sup_{\varepsilon}\sup_{M}e^{-u(-v_{\varepsilon}(\psi))-\varphi_{m'}},$$
then it follows from inequality \eqref{equ:20171122e} and the dominated convergence theorem that
\begin{equation}
\label{equ:20171122j}
\begin{split}
&\lim_{\varepsilon\to0}\int_{M}v''_{\varepsilon}(\psi)| F|^2e^{-u(-v_{\varepsilon}(\psi))-\varphi_{m'}}
\\=&\int_{M}\frac{1}{B}\mathbb{I}_{\{-t_{0}-B<\psi<-t_{0}\}}|F|^2e^{-u(-v(\psi))-\varphi_{m'}}
\\\leq&(\sup_{M}e^{-u(-v(\psi))})\int_{M}\frac{1}{B}\mathbb{I}_{\{-t_{0}-B<\psi<-t_{0}\}}|F|^2e^{-\varphi_{m'}}<+\infty.
\end{split}
\end{equation}

Note that $\inf_{\varepsilon}\inf_{M}e^{v_{\varepsilon}(\psi)-\varphi_{m'}}c(-v_{\varepsilon}\circ\psi)>0$,
then it follows from inequality \eqref{equ:20171122i} and \eqref{equ:20171122j} that
$\sup_{\varepsilon}\int_{M}|F_{m',\varepsilon}-(1-v'_{\varepsilon}(\psi)){F}|^{2}<+\infty.$
Combining with
\begin{equation}
\label{equ:20171122k}
\sup_{\varepsilon}\int_{M}|(1-v'_{\varepsilon}(\psi)){F}|^{2}\leq\int_{M}\mathbb{I}_{\{\psi<-t_{0}\}}|F^{2}|<+\infty,
\end{equation}
one can obtain that $\sup_{\varepsilon}\int_{M}|F_{m',\varepsilon}|^{2}<+\infty$,
which implies that
there exists a subsequence of $\{F_{m',\varepsilon}\}_{\varepsilon\to0}$ (also denoted by $\{F_{m',\varepsilon}\}_{\varepsilon\to0}$)
compactly convergent to a holomorphic (n,0) form on $M$ denoted by $F_{m'}$.

Note that $\sup_{\varepsilon}\sup_{M}e^{v_{\varepsilon}(\psi)-\varphi_{m'}}c(-v_{\varepsilon}\circ\psi)<+\infty$ and
$|F_{m',\varepsilon}-(1-v'_{\varepsilon}(\psi)){F}|^{2}\leq 2(|F_{m',\varepsilon}|^{2}+|\mathbb{I}_{\{\psi<-t_{0}\}}F|^{2})$,
then it follows from inequality \eqref{equ:20171122k} and the dominated convergence theorem on any given $K\subset\subset D$
$($with dominant function
$$2(\sup_{\varepsilon}\sup_{K}(|F_{m',\varepsilon}|^{2})+\mathbb{I}_{\{\psi<-t_{0}\}}|F|^{2})\sup_{\varepsilon}
\sup_{M}e^{v_{\varepsilon}(\psi)-\varphi_{m'}}c(-v_{\varepsilon}\circ\psi))$$
that
\begin{equation}
\label{equ:20171122l}
\begin{split}
&\lim_{\varepsilon\to0}\int_{K}|F_{m',\varepsilon}-(1-v'_{\varepsilon}(\psi)){F}|^{2}e^{v_{\varepsilon}(\psi)-\varphi_{m'}}c(-v_{\varepsilon}\circ\psi)
\\=&\int_{K}|F_{m'}-(1-b(\psi)){F}|^{2}e^{v(\psi)-\varphi_{m'}}c(-v\circ\psi).
\end{split}
\end{equation}
Combining with inequality \eqref{equ:20171122j} and \eqref{equ:20171122i}, one can obtain that
\begin{equation}
\label{equ:20171122m}
\begin{split}
&\int_{K}|F_{m'}-(1-b(\psi)){F}|^{2}e^{v(\psi)-\varphi_{m'}}c(-v\circ\psi)
\\\leq&(\sup_{M}e^{-u(-v(\psi))})\int_{M}\frac{1}{B}\mathbb{I}_{\{-t_{0}-B<\psi<-t_{0}\}}|F|^2e^{-\varphi_{m'}}
\end{split}
\end{equation}
which implies
\begin{equation}
\label{equ:20171122n}
\begin{split}
&\int_{M}|F_{m'}-(1-b(\psi)){F}|^{2}e^{v(\psi)-\varphi_{m'}}c(-v\circ\psi)
\\\leq&(\sup_{M}e^{-u(-v(\psi))})\int_{M}\frac{1}{B}\mathbb{I}_{\{-t_{0}-B<\psi<-t_{0}\}}|F|^2e^{-\varphi_{m'}}.
\end{split}
\end{equation}

\

\emph{Step 5: Singular weight}

\

Note that
\begin{equation}
\label{equ:20171122o}
\int_{M}\frac{1}{B}\mathbb{I}_{\{-t_{0}-B<\psi<-t_{0}\}}|F|^2e^{-\varphi_{m'}}\leq\int_{M}\frac{1}{B}\mathbb{I}_{\{-t_{0}-B<\psi<-t_{0}\}}|F|^{2}e^{-\varphi}<+\infty,
\end{equation}
and $\sup_{M}e^{-u(-v(\psi))}<+\infty$,
then it from \eqref{equ:20171122n} that
$$\sup_{m'}\int_{M}|F_{m'}-(1-b(\psi)){F}|^{2}e^{v(\psi)-\varphi_{m'}}c(-v\circ\psi)<+\infty.$$
Combining with $\inf_{m'}\inf_{M}e^{v(\psi)-\varphi_{m'}}c(-v(\psi))>0$,
one can obtain that
$$\sup_{m'}\int_{M}|F_{m'}-(1-b(\psi)){F}|^{2}<+\infty.$$
Note that
\begin{equation}
\label{equ:20171122p}
\int_{M}|(1-b(\psi)){F}|^{2}\leq\int_{M}|\mathbb{I}_{\{\psi<-t_{0}\}}F|^{2} <+\infty.
\end{equation}
Then $\sup_{m'}\int_{M}|F_{m'}|^{2}<+\infty$,
which implies that there exists a compactly convergent subsequence of $\{F_{m'}\}$ denoted by $\{F_{m''}\}$,
which is convergent a holomorphic (n,0) form $\tilde{F}$ on $M$.

Note that $\sup_{m'}\sup_{M}e^{v(\psi)-\varphi_{m'}}c(-v\circ\psi)<+\infty$,
then it follows from inequality \eqref{equ:20171122p} and the
dominated convergence theorem on any given compact subset $K$ of $M$ $($with dominant function $2[\sup_{m''}\sup_{K}(|F_{m''}|^{2})+\mathbb{I}_{\{\psi<-t_{0}\}}|F|^{2}]\sup_{M}e^{v(\psi)-\varphi_{m'}}$ $)$ that
\begin{equation}
\label{equ:20171122q}
\begin{split}
&\lim_{m''\to+\infty}\int_{K}|F_{m''}-(1-b(\psi)){F}|^{2}e^{v(\psi)-\varphi_{m'}}c(-v\circ\psi)
\\=&\int_{K}|\tilde{F}-(1-b(\psi)){F}|^{2}e^{v(\psi)-\varphi_{m'}}c(-v\circ\psi).
\end{split}
\end{equation}
Note that for any $m''\geq m'$, $\varphi_{m'}\leq\varphi_{m''}$ holds,
then it follows from inequality \eqref{equ:20171122n} and \eqref{equ:20171122o}
that
\begin{equation}
\label{equ:20171122r}
\begin{split}
&\lim_{m''\to+\infty}\int_{K}|F_{m''}-(1-b(\psi)){F}|^{2}e^{v(\psi)-\varphi_{m'}}c(-v\circ\psi)
\\\leq&
\limsup_{m''\to+\infty}\int_{K}|F_{m''}-(1-b(\psi)){F}|^{2}e^{v(\psi)-\varphi_{m''}}c(-v\circ\psi)
\\\leq&
\limsup_{m''\to+\infty}(\sup_{M}e^{-u(-v(\psi))})\int_{M}\frac{1}{B}\mathbb{I}_{\{-t_{0}-B<\psi<-t_{0}\}}|F|^2e^{-\varphi_{m''}}
\\\leq&
(\sup_{M}e^{-u(-v(\psi))})C<+\infty.
\end{split}
\end{equation}
Combining with equality \eqref{equ:20171122q},
one can obtain that
$$\int_{K}|\tilde{F}-(1-b(\psi)){F}|^{2}e^{v(\psi)-\varphi_{m'}}c(-v\circ\psi)\leq(\sup_{M}e^{-u(-v(\psi))})C,$$
for any compact subset of $M$,
which implies
$$\int_{M}|\tilde{F}-(1-b(\psi)){F}|^{2}e^{v(\psi)-\varphi_{m'}}c(-v\circ\psi)\leq(\sup_{M}e^{-u(-v(\psi))})C.$$
When $m'\to+\infty$,
it follows from Levi's Theorem that
\begin{equation}
\label{equ:20171122s}
\begin{split}
\int_{M}|\tilde{F}-(1-b(\psi)){F}|^{2}e^{v(\psi)-\varphi}c(-v(\psi))\leq(\sup_{M}e^{-u(-v(\psi))})C.
\end{split}
\end{equation}

\

\emph{Step 6: ODE system}

\

It suffices to find $\eta$ and $\phi$ such that
$(\eta+g^{-1})=e^{-\psi_{m}}e^{-\phi}\frac{1}{c(-v_{\varepsilon}(\psi_{m}))}$ on $M$.
As $\eta=s(-v_{\varepsilon}(\psi_{m}))$ and $\phi=u(-v_{\varepsilon}(\psi_{m}))$,
we have $(\eta+g^{-1}) e^{v_{\varepsilon}(\psi_{m})}e^{\phi}=(s+\frac{s'^{2}}{u''s-s''})e^{-t}e^{u}\circ(-v_{\varepsilon}(\psi_{m}))$.

Summarizing the above discussion about $s$ and $u$, we are naturally led to a
system of ODEs (see \cite{guan-zhou12,guan-zhou13p,guan-zhou13ap,GZopen-effect}):
\begin{equation}
\label{GZ}
\begin{split}
&1).\,\,(s+\frac{s'^{2}}{u''s-s''})e^{u-t}=\frac{1}{c(t)}, \\
&2).\,\,s'-su'=1,
\end{split}
\end{equation}
where $t\in(T,+\infty)$.

It is not hard to solve the ODE system \ref{GZ} and get $u(t)=-\log(\int_{T}^{t}c(t_{1})e^{-t_{1}}dt_{1})$ and
$s(t)=\frac{\int_{T}^{t}(\int_{T}^{t_{2}}c(t_{1})e^{-t_{1}}dt_{1})dt_{2}}{\int_{T}^{t}c(t_{1})e^{-t_{1}}dt_{1}}$
(see \cite{guan-zhou13ap}).
It follows that $s\in C^{\infty}((T,+\infty))$ satisfies $s\geq0$, $\lim_{t\to+\infty}u(t)=-\log(\int_{T}^{+\infty}c(t_{1})e^{-t_{1}}dt_{1})$ and
$u\in C^{\infty}((T,+\infty))$ satisfies $u''s-s''>0$.

As $u(t)=-\log(\int_{T}^{t}c(t_{1})e^{-t_{1}}dt_{1})$ is decreasing with respect to $t$,
then it follows from $-T\geq v(t)\geq\max\{t,-t_{0}-B_{0}\}\geq -t_{0}-B_{0}$ for any $t\leq0$
that
\begin{equation}
\begin{split}
\sup_{M}e^{-u(-v(\psi))}
\leq\sup_{t\in(T,t_{0}+B]}e^{-u(t)}
=\int_{T}^{t_{0}+B}c(t_{1})e^{-t_{1}}dt_{1},
\end{split}
\end{equation}
therefore we are done.
Thus we prove Lemma \ref{lem:GZ_sharp}.

\vspace{.1in} {\em Acknowledgements}.
The present article was completed when the author was a member of School of Mathematics, Institute for Advanced Study.
The author would like to thank the valuable comments and sincerely help of the referee.
The author would like to thank Zhitong Mi for helpful discussions.

The author was supported by NSFC-11825101, NSFC-11522101 and NSFC-11431013,
and the National Science Foundation Grant No. DMS-163852 and the Ky Fan and Yu-Fen Fan Membership Fund.

\bibliographystyle{references}
\bibliography{xbib}

\end{document}